\newdefinition{definition}{Definition}[section]
\newdefinition{example}[definition]{Example}
\newdefinition{remark}[definition]{Remark}
\newtheorem{lemma}[definition]{Lemma}
\newtheorem{theorem}[definition]{Theorem}
\newtheorem{proposition}[definition]{Proposition}
\numberwithin{equation}{section}
\journal{...} %
\begin{document}

\begin{frontmatter}



\title{Dividend Barrier Strategies in a Renewal Risk Model With Phase-Type Distributed Interclaim Times\tnoteref{t1}}
\tnotetext[t1]{This work was supported by National Natural Science Foundation of China Grants No. 11471171, No. 11911530091 and No. 11931018.}

\author[dhu]{Linlin Tian}\ead{linlin.tian@dhu.edu.cn}
\author[nku]{Zhaoyang Liu\corref{cor1}}\ead{liuzhy@nankai.edu.cn}
\cortext[cor1]{Corresponding author}
\address[dhu]{College of Science, Donghua University,
Shanghai, P.R. China 201620}
\address[nku]{School of Mathematical Sciences, Nankai University, Tianjin, P.R. China 300071}
\begin{abstract}
   In this paper, we consider the optimal  dividend problem of the renewal risk model with phase-type distributed interclaim times and exponentially distributed claim sizes.
  Assume that the phases of the interclaim times can be observed.  We study
  the optimal  dividend under the 2--order  and the $n$--order ($n\ge 3$)separately.    In the case of  2--order phase-type distributed interclaim times, we show that the optimal dividend policy is the optimal phase-wise barrier strategy. As a byproduct, we find that in this case, the phase with higher barrier is the phase with the higher intensity to the next claim.
   In the case of  n-order ($n\ge3$) phase-type distributed interclaim times,
    an iteration algorithm is presented to show that the optimal phase-wise barrier strategy is optimal among all dividend policies. We also find a similar conclusion like in the case of 2--order, the phase with the highest barrier is the phase with the highest intensity to the next claim.
\end{abstract}

\begin{keyword}
Hamilton-Jacobi-Bellman equation \sep phase-type distribution \sep optimal dividend.

\MSC[2010] 60J25 \sep 65K10 \sep 91B30 \sep 93E20
\end{keyword}

\end{frontmatter}


\section{Introduction}
\label{sec:Introduction}
The optimal dividend problem can be traced back to \cite{DeFinetti1957}.
\cite{AsmussenTaksar1997} studied this problem when the surplus process is modeled by a Brownian motion with drift.
The dividend optimization problem under the compound Poisson model is studied in \cite{AzcueMuler2005Optimal}, \cite{Belhaj2010} and \cite{GerberShiu2006}.
When the Poisson process is replaced by the renewal process, \cite{Albrecher2005} calculated the distribution of the discounted dividends for a barrier strategy when the interclaim times follow the generalized Erlang($n$)-distribution.
By numerical simulation, \cite{AlbrecherHartinger2006} showed that the horizontal dividend barrier strategy is not necessarily optimal.
Mishura and Schmidli \cite{MishuraSchmidli2012} showed that the phase-wise dividend barrier strategy is optimal when the interclaim times are Erlang($n$)-distributed and the claim sizes are exponentially distributed.
In this paper, we will extend their work and consider a renewal risk model where the interclaim times are phase-type distributed.

For a given $n$--order phase-type distributed interclaim time ($n\ge 2$ is a positive constant),
we assume  that the phases of the interclaim times can be observed. We focus on the optimal phase-wise barrier strategy. Now we introduce the definition of the  phase-wise barrier strategy first. For any given phase $i\in\{1,2,\ldots, n\}$, we set up barriers $\{b_i\}_{i=1}^n$ such that in any phase $i,$  all capital above $b_i$ is paid as dividend, if the capital is less than the barrier $b_i$, then the company will keep the premium until the occurrence of the next claim or reaching the given level $b_i$. This kind of strategy is called the phase-wise barrier strategy. The optimal phase-wise barriers $\{b_i^*\}_{i=1}^n$ is chosen among all phase-wise barriers $\{b_i\}_{i=1}^n$ such that for any initial  capital $x<b_i^*$, the  cumulative discounted discounted dividend at $x$ attains its maximum.

By the martingale approach and some direct calculations,  we get  some necessary conditions for the optimal phase-wise barriers. Then we deal with the case of $2$--order and $n(n\ge 3)$--order separately due to the complexity of the inverse of the $n\times n$ matrix   in the case of $n(n\ge 3)$--order. Now we explain those two parts separately.

In the case of $2$--order phase-type distribution, there are only two barriers $b_1^*, b_2^*$. If $b_1^*<b_2^*,$ The interval $[0,+\infty)$ is only separated into only three parts: $[0,b_1^*)\cup[b_1^*, b_2^*)\cup[b_2^*,+\infty).$ We can show that the optimal phase-wise barrier strategies' corresponding function is concave on $[b_1^*,+\infty).$ The way of comparing of the size of $b_1^*,b_2^*$ can also be derived, which is the phase with the higher barrier is the phase with the higher intensity to the next claim. Mathematically speaking, the necessary condition of $b_1^*<b_2^*$ is $\lambda_{23}<\lambda_{13}$, where $\lambda_{i3}$ ($i=1,2$) denotes the intensity of phase $i$ of entering the claim state. In the last, we can theoretically show that the optimal phase-wise barrier strategy is optimal among all admissible strategies via the  HJB equation.

For the case of $n(n\ge 3)$--order distributed interclaim times, we use the numerical method to explore the optimal strategy. We bring forward a  similar iteration algorithm used in \cite{Albrecher2017} and \cite{Liu2020Dividend} to show that the optimal strategy is a phase-wise barrier strategy, which is obviously  the optimal phase-wise barrier strategy. The proof of the convergence of the algorithm is given. Examples of optimal phase-wise  barriers  are presented to illustrate the applicability of numerical methods.
Interestingly, we also prove that the phase with the highest optimal barrier is the phase with the highest intensity to the next claim. This phenomenon coincides with the conclusion we get in the case of $2$--order.

The whole paper is organized as follows.
In Section \ref{sec:introducemodel}, we  establish the basic model and formulate the problem.
In Section \ref{sec:phasewisebarrier}, we study the optimal phase-wise barrier and get some necessary conditions for the optimal phase-wise barriers.
In Section \ref{sec:2ordercase}, we study the case of 2--order phase-type distributed interclaim times, the optimal phase-wise barrier strategy is the optimal strategy among all dividend strategies. As a byproduct, we find that  the phase with the higher barrier is the phase with the higher intensity to next claim.
In Section \ref{sec:nordercase}, we numerically study the optimal dividend problem under the  $n$($n\ge 3$)--order  phase-type distributed interclaim times, from which we find  that the optimal policy is the optimal phase-wise barrier strategy. Similar with the Section \ref{sec:2ordercase}, we also get a  conclusion that the phase with the phase with the highest optimal barrier is the phase with the highest intensity to the next claim. Some examples are given to show the application of the algorithm.

\section{Models and assumption}
\label{sec:introducemodel}
In this section, we present the surplus process of the insurer, which includes models for aggregate claims and dividend payments to policyholders, define the value function.
We work on a complete probability space $(\Omega,\mathscr{F}, \mathbb{P})$ on which all processes are well defined.
The information at time $t$ is given by $\mathscr{F}_t$, in which $\{\mathscr{F}_t: t \ge 0\}$ is the complete filtration generated by the claim and the dividend processes.

The surplus process of an insurance company with dividend payments is modeled as
\begin{equation*}
X_t^D=x+ct-\sum_{i=1}^{N_t}Y_i-D_t,
\end{equation*}
where $c>0$ is the premium rate, $x$ is the initial wealth, $D_t$ is the cumulative amount of dividends paid out up to time $t$.
$N_t$ is a simple counting process representing the number of  the incoming claims and $\{Y_i\}$ are i.i.d. with a common distribution $G(x):=1-e^{-\beta x} (\beta>0)$ and independent of $N_t$.
The interclaim times are independent and follow the phase-type distribution.

The phase-type distribution is the distribution of the life time of a terminating Markov process $\{J_t\}_{t\ge 0}$ with finitely many phases and time homogenous transition rates.
More precisely, let $\{\bar{J}_t\}_{t\ge 0}$ be a Markov process on the finite state space $E_\Delta=E\cup\{\Delta\}$,  where $E$ is the state space and $\Delta$ is the absorbing state.
The terminating Markov process $\{J_t\}$ with state space $E$ and intensity matrix $\mathbf{T}$ is defined as the restriction to $E$ of $\{\bar{J}_t\}$.
The Markov process $\{\bar{J}_t\}_{t\ge 0}$ jumps from one state to another.
All the states $i\in E$ are transient and once the Markov process $\{\bar{J}_t\}$ enters the absorbing state, then it will stay in this absorbing state forever.
See \cite{AsmussenAlbrecher2010} and \cite{Bladt2005} for more details about phase-type distribution.

In our model, the state space $E:=\{\mbox{state}\;1, \mbox{state}\;2,\cdots, \mbox{state}\;n\}$ and the $\Delta:=\mbox{state}\;n+1$, where $n$ is a positive constant.
We call this phase-type distribution a $n$-order phase-type distribution.
Thus, the intensity matrix of $\bar{J}_t$ has the form
\begin{equation*}
\mathbf{\Lambda}=
 \left(
           \begin{array}{cc}
             \mathbf{T} & \mathbf{t} \\
             \mathbf{0} & 0 \\
           \end{array}
         \right),
\end{equation*}
where $\mathbf{T}$ is $n\times n$ dimensional matrix, $\mathbf{t}$ is a $n$ dimensional column vector and $\mathbf{0}$ is the $n$ dimensional row vector of zeros.
In particular, $\mathbf{t}=-\mathbf{T}\mathbf{e}$, where $\mathbf{e}$ is the n-dimensional column vector with all components equal to $1$, which means, the intensity of leaving state $i$ equals to the sum of the intensities of leaving state $i$ and entering the new state $j$.

The Markov process $J_t$ jumps from one state (or we all state as ``phase") to another and stay in the state $i$ for an exponential time with parameter $\lambda_i$, $0\le i\le n,\lambda_i=-\lambda_{ii}>0$.
Once it enters the absorbing state $n+1$, the claim will occur.
After the claim, the Markov chain $J_t$ will restart at the state $i\in E$ with the initial probabilities $\pi_i$, $i=1,2\cdots, n$.
Here $\sum_{i=1}^{n}\pi_i=1.$
Then it continues to jump from one state to another until the next absorption (next claim). Now we assume that the Markov process $\{J_t\}$ can be observed and is independent of the claim sizes and $N$.


$D_t$ is the cumulative amount of dividends paid out up to time $t$. We say that a dividend strategy $D_t$ is \emph{admissible} if
\begin{itemize}
\item  $D_t$ is predictable, nondecreasing, c${\grave{\mbox{a}}}$gl${\grave{\mbox{a}}}$d;
\item The process $D_t$ verifies $D_t\le x+ct-\sum_{i=1}^{N_t}Y_i$.
\end{itemize}
We denote by $\mathscr{U}_{ad}$ the set of all the admissible control strategies.
For any dividend strategy $D$, the expected discounted dividend payments is defined as
\begin{align*}
V_i^{D}(x)=\mathbb{E}\left[\int_{0-}^{\tau-}e^{-\delta t} \mathrm{d}D_t\Big|J_0=i,X_{0-}^D=x\right]\\:=\mathbb{E}_{ix}\left[\int_{0-}^{\tau-}e^{-\delta t} \mathrm{d}D_t\right], \quad i=1,2,\cdots, n,
\end{align*}
where $\delta>0$ is the discount factor, $\tau=\inf\{t:X_t^{D}<0\}$ is the time of ruin.
The optimal value function is defined as
\begin{equation}\label{phasevaluefunction}
V_i(x)=\sup_{D\in\mathscr{U}_{ad}}V_i^D(x) ,
\end{equation}
for all $x \ge0$.
In the following sections,
 we will show that optimal dividend policy is the optimal phase-wise barrier strategy. But we will treat the case of 2--order and the n--order separately. First, we study the optimal phase-wise barrier strategy.

\section{The optimal phase-wise barrier}
\label{sec:phasewisebarrier}
Assume the insurance company choose  a phase-wise barrier strategy, which means, they will choose a barrier $b_i\ge 0$ for a given phase $i$, $i=1,2,\cdots, n$.
If the Markov process $\{J_t\}$ is in state $J_t=i$, all the capital above $b_i$ is paid as a dividend.
If the wealth equals to $b_i$ when $J_t=i$, then all the incoming premium will be paid as dividends until the next jump of $J_t$ occurs.
Let
\begin{equation*}
f_i^D(x)=\mathbb{E}\left[\int_{0-}^{\tau-}e^{-\delta t} \mathrm{d}D_t\Big|J_0=i,X_{0-}^D=x\right], i=1,2,\cdots, n,
\end{equation*}
denote the phase-wise barrier strategy $D$'s cumulative discounted dividends, for simplicity we omit the symbol $D$   if there is no confusion.
For simplicity, we let $f_{n+1}(x)=\sum_{i=1}^n\pi_i\mathbb{E}[f_i(x-Y)]$, where $Y$ is a random variable with the distribution $G(\cdot)$.
Now we show that the functions $\{f_i(x)\}_{i=1}^n$ are continuously differentiable on $[0,b_i]$ and fulfill
\begin{equation}\label{barrier1}
cf_i'(x)+\sum_{j=1,j\neq i}^{n+1}\lambda_{ij}f_j(x)-(\lambda_i+\delta)f_i(x)=0, \quad x\le b_i,
\end{equation}
and
\begin{equation*}
f_i(x)=f_i(b_i)+x-b_i,\quad x\ge b_i.
\end{equation*}
Actually, for $0\le x<b_i$, we start by considering all possible events over a very small time interval $[0,h]$, we let $h$ be a small enough such that $x+ch<b_i$. Conditioning on the first jump of $\{J_t\}$, we arrive at
\begin{align}\label{toshowdiff1}
f_i(x)=e^{-\delta h}(1-\lambda_ih)f_i(x+ch)+\sum_{j=1,j\neq i}^{n+1}\int_0^he^{-\delta t}f_j(x+ct)\lambda_{ij}e^{-\lambda_it}dt+o(h).
\end{align}
Upon rearrangement of \eqref{toshowdiff1}, division by $h$ and letting $h\rightarrow 0$ yields
\begin{equation*}
cf_i'(x)+\sum_{j=1,j\neq i}^{n+1}\lambda_{ij}f_j(x)-(\lambda_i+\delta)f_i(x)=0, \quad x\le b_i.
\end{equation*}
Although the above derivative are understood to be right-derivatives, one can apply the same arguments with $x-ch$ in replace of $x$ to find that \eqref{barrier1} are also satisfied by the left-derivative.
Eventually, we get that $f_i(x)$  are continuously differentiable on $[0,b_i]$ and satisfies \eqref{barrier1}.

Conditioning on the first jump of $\{J_t\}$,
\begin{align}\label{barrier11}
f_i(b_i)=&\int_0^\infty\int_0^tce^{-\delta s}\mathrm{d}s \lambda_{i}e^{-\lambda_i t} \mathrm{d}t+\int_0^{\infty}\sum_{i=1,i\neq j}^{n+1}\lambda_{ij}f_j(b_i)e^{-(\lambda_i+\delta)t}\mathrm{d}t \nonumber\\
=&\frac{c+\sum_{i=1,i\neq j}^{n+1}\lambda_{ij}f_j(b_i)}{\lambda_i+\delta}.
\end{align}
Combining with (\ref{barrier1}), we conclude that $f_i'(b_i)=1.$

On the other hand,  we show that
a solution $f_i(x)$ to equations (\ref{barrier1}) on $[0,b_i]$ and $f_i(x)=f_i(b_i)+x-b_i$ for $x>b_i$ with $f_i'(b_i)=1$ is the expected cumulative discounted dividend value of the barrier strategy with barriers at $b_i$. Consider a phase-wise barrier  strategy with barriers $\{b_i\}_{i=1}^n$ and the corresponding surplus process is denoted by $X_t^b$. Denote the ruin time as $\tau.$ By the definition of phase-wise barrier strategy, we know that for any given phase $J_t=i, i=1,2,\cdots,n$,  the surplus $X_t^b\le b_i$. By the It\^{o} formula,
\begin{align*}
f_{J_t}(X_{\tau\wedge t}^b)e^{-\delta (\tau\wedge t)}=&f_i(x)+\int_0^{\tau\wedge t}e^{-\delta s}[-\delta f_{J_s}(X_s^b)+\mathscr{L}f_{J_s}(X_s^b)]\mathrm{d}s \\
&-\int_0^{\tau\wedge t}{f_{J_s}}'(X_s^b)e^{-\delta s}\mathrm{d}L_s,
\end{align*}
where
\begin{equation*}
\mathscr{L}f_i(x)=c{f_i}'(x)+\sum_{j=1,j\neq i }^{n+1}\lambda_{ij}{f_j}(x)-\lambda_{i}f_i(x)
\end{equation*}
is the infinitesimal generator of the process.
Since $X_t^b$ follows the phase-wise barrier strategy with barriers $\{b_i\}_{i=1}^n$, $f_i(x)$ solves \eqref{barrier1} on $[0,b_i)$ and $f_i'(x)=1$ on $[b_i,+\infty)$,  we have that
\begin{align*}
\mathbb{E}\left[f_{J_t}(X_{\tau\wedge t}^b)e^{-\delta (\tau\wedge t)}\right]=f_i(x)-\mathbb{E}\left[\int_0^{\tau\wedge t}e^{-\delta s}\mathrm{d}L_s\right].
\end{align*}
Using the dominated convergence theorem and letting $t\rightarrow\infty$, we get that $f_i(x)=\mathbb{E}\left[\int_0^{\tau}e^{-\delta s}\mathrm{d}L_s\right]$, completing the desired conclusion.

Now we try to analyse the optimal phase-wise  barriers  $\{b_i\}_{i=1}^n$ such that $\{f_i(x)\}_{i=1}^n$ become maximal for all $x<b_i$.
Denote the optimal phase-wise barrier as $\{b_i^*\}_{i=1}^n$ and the corresponding value function as $\{f_i^*\}_{i=1}^n$.
Given $1\le i\le n$ and  $x<b_i^*$,
\begin{align}\label{numerical1}
f_i^{*}(x)=&\int_0^{\frac{b_i^{*}-x}{c}}e^{-(\lambda_i+\delta)t} \Bigg(\sum_{\substack{j=1\\j\neq i}}^{n+1}\lambda_{ij}f_j^{*}(x+ct)\Bigg)\mathrm{d}t +e^{-(\lambda_i+\delta)(\frac{b_i^{*}-x}{c})}f_i^{*}(b_i^{*}).
\end{align}

For a given initial wealth $x<b_i^*$, consider a special strategy: paying the incoming premium as dividend until next phase jump.  After the jump, follow the ``optimal'' phase-wise barrier strategy. Then, by the definition of the optimal phase-wise barrier strategy, we see that
\begin{align}\nonumber
f_i^*(x)&>\int_0^\infty\Big(\int_0^tce^{-\delta s}\mathrm{d}s\Big)\lambda_i e^{-\lambda_i t}\mathrm{d}t+\int_0^\infty \Bigg(\sum_{\substack{j=1\\j\neq i}}^{n+1}\lambda_{ij}f_j^*(x)\Bigg)e^{-(\delta+\lambda_i)t}\mathrm{d}t\\\label{barrier2}
&=\frac{c+\sum_{j=1,j\neq i}^{n+1}\lambda_{ij}f_j^*(x)}{\lambda_i+\delta}.
\end{align}
Combining with (\ref{barrier1}), we see that ${f_i^*}'(x)>1$ on $(0,b_i^*).$
Now we show that ${f_i^*}''(b_i^*)=0$. Here we borrow some arguments from \cite{MishuraSchmidli2012}. Since $f_i^*(x)=f_i^*(b_i^*)+x-b_i^*$ on $[b_i^*, +\infty)$, ${f_i^*}''(b_i^*+)=0$. Thus, we only need to show that ${f_i^*}''(b_i^*-)$ exists and equals to $0.$ We consider a function $M_i(x)$ satisfies \eqref{barrier1} on $[0,+\infty)$ with the boundary condition $M_i(0)=f_i^*(0)$. Obviously we have  $M_i(x)=f_i^*(x)$ on $[0,b_i^*].$  We already know that $M_i(x)>1$ for $x<b_i^*$. Suppose that $M_i(x)<1$ for $x\in(b_i^*,b_i^*+\varepsilon]$ for some $\varepsilon>0$. Suppose the Markov process  $J_t$ start with $J_0=i.$  We consider the following strategy.
First use a barrier strategy with a barrier at $b_i^*+\varepsilon$ then we follow the strategy with barriers at $\{b_i^*\}_{i=1}^n$, except the first time we reach state $i$, where we use a barrier at $b_i^*+\varepsilon$.  Denote the expected cumulative discounted dividend  of this strategy by $\tilde{f}_i(x)$. Then $\tilde{f}_i(x)$ solves
\begin{align}
c\tilde{f}_i'(x)+\sum_{j=1,j\neq i}^{n+1}\lambda_{ij}f_j(x)-(\lambda_i+\delta)\tilde{f}_i(x)=0, x\le b_i^*+\varepsilon,
\end{align}
and $\tilde{f}_i'(b_i^*+\varepsilon)=1$.
Further, $v(x)=\tilde{f}_i(x)-M_i(x)$ solves $v'(x)=(\lambda_i+\delta)v(x)$. Since $v'(b_i^*+\varepsilon)=1-{M_i}'(b_i^*+\varepsilon)>0,$ we conclude that $v(x)>0$ for all $x$.
 Suppose now $J_0\neq i$. If we now follow the strategy with barriers at $\{b_i^*\}_{i=1}^n$, except the first time we reach state $i$, where we use a barrier at $b_i^*+\varepsilon$, we get a higher
value.
In the same way, we find that using a barrier at $b_i^*+\varepsilon$ the first two times state $i$ is visited, a
higher value is reached. Proceeding in the same way, we find that a barrier at $b_i^*+\varepsilon$ instead of $b_i^*$ would
give a larger expected cumulative discounted dividend value. Since we have assumed that the barrier $b_i^*$ is chosen optimally, it contradicts the assumption of $M_i'(x)\le1$. Eventually we see that $M_i'(x)\ge 1$ in an environment of $b_i^*.$  Eventually, we see that ${f_i^*}'(x)$ attains its minimum at $b_i^*$, which shows that ${f_i^*}''(b_i^*)=0.$

Now we summarize the above  two necessary conditions for the optimal phase-wise barrier $\{b_i^*\}_{i=1}^n$:
\begin{enumerate}[(1)]
  \item ${f_i^*}'(x)>1$ on $(0,b_i^*),  i=1,2\ldots, n.$
  \item ${f_i^*}''(b_i^*)=0.$
\end{enumerate}
Different with \cite{MishuraSchmidli2012}, we can not solve the optimal phase-wise barrier strategy's value function directly due to the parameter complexity. Thus, we will treat the case of 2--order and $n$--order separately. In summary,  in the case of 2--order phase-type distributed interclaim times,  we theoretically show the the optimal phase-wise barrier strategy is optimal among all dividend policies. In the case of $n$--order, we use the numerical way to show the same conclusion.
\section{2-order phase-type distributed interclaim times}
\label{sec:2ordercase}
\subsection{The concavity}
\begin{theorem}\label{theoremconcave1}
If $0<b_1^*<b_2^* $, then the optimal phase-wise barrier strategy's value function $f_2^*(x)$ is concave on $[b_1^*,+\infty).$
\end{theorem}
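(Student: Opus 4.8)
The plan is to prove the stronger pointwise statement that ${f_2^*}''(x)\le 0$ for every $x\ge b_1^*$, which yields concavity of $f_2^*$ on $[b_1^*,+\infty)$ immediately. On $[b_2^*,+\infty)$ this is trivial, since there $f_2^*(x)=f_2^*(b_2^*)+x-b_2^*$ is affine; so the argument concentrates on the slab $[b_1^*,b_2^*]$, on which phase $1$ has already reached its barrier while phase $2$ has not.

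\emph{Step 1 (reduction to a constant-coefficient linear ODE on $[b_1^*,b_2^*]$).} Since $b_1^*<b_2^*$, on this slab we may substitute $f_1^*(x)=f_1^*(b_1^*)+x-b_1^*=:L(x)$ into \eqref{barrier1} with $i=2$, obtaining $c{f_2^*}'(x)=(\lambda_2+\delta)f_2^*(x)-\lambda_{21}L(x)-\lambda_{23}f_3^*(x)$. Writing $f_3^*(x)=\pi_1\,\mathbb{E}[f_1^*(x-Y)]+\pi_2\,\mathbb{E}[f_2^*(x-Y)]$ with $Y\sim\mathrm{Exp}(\beta)$ and the convention $f_i^*\equiv 0$ on $(-\infty,0)$, differentiating the convolution gives ${f_3^*}'(x)=\beta\bigl(\pi_1 f_1^*(x)+\pi_2 f_2^*(x)-f_3^*(x)\bigr)$. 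Thus $(f_2^*,f_3^*)$ solves on $[b_1^*,b_2^*]$ a planar autonomous system $\mathbf{z}'=A\mathbf{z}+\mathbf{g}(x)$ with $\mathbf{g}$ affine and $A=\Bigl(\begin{smallmatrix}(\lambda_2+\delta)/c & -\lambda_{23}/c\\ \beta\pi_2 & -\beta\end{smallmatrix}\Bigr)$. The algebraic crux is
\[
\det A=-\tfrac{\beta}{c}\bigl((\lambda_2+\delta)-\pi_2\lambda_{23}\bigr)=-\tfrac{\beta}{c}\bigl(\lambda_{21}+\delta+\pi_1\lambda_{23}\bigr)<0,
\]
using $\lambda_2=\lambda_{21}+\lambda_{23}$ and $\pi_1+\pi_2=1$. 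Hence the eigenvalues $\theta_+,\theta_-$ of $A$ are real with $\theta_+>0>\theta_-$; in particular $0$ is not an eigenvalue, so the affine forcing contributes an affine particular solution, and on $[b_1^*,b_2^*]$ we may write $f_2^*(x)=C_+e^{\theta_+x}+C_-e^{\theta_-x}+q(x)$ with $q$ affine and $C_\pm$ constants. Consequently ${f_2^*}''(x)=C_+\theta_+^{2}e^{\theta_+x}+C_-\theta_-^{2}e^{\theta_-x}$ on $[b_1^*,b_2^*]$.

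\emph{Step 2 (a sign argument).} Because $\theta_+\ne\theta_-$ and $\theta_+^{2},\theta_-^{2}>0$, the equation ${f_2^*}''(x)=0$ forces $e^{(\theta_+-\theta_-)x}$ to equal a fixed constant, which can happen for at most one value of $x$; hence ${f_2^*}''$ has at most one zero in $[b_1^*,b_2^*]$. By the necessary condition ${f_2^*}''(b_2^*)=0$ established earlier, that zero is $b_2^*$, so ${f_2^*}''$ keeps a constant sign on the open interval $(b_1^*,b_2^*)$. If that sign were nonnegative, ${f_2^*}'$ would be nondecreasing on $[b_1^*,b_2^*]$, so ${f_2^*}'(x)\le {f_2^*}'(b_2^*)=1$ for $x\in(b_1^*,b_2^*)$ — contradicting the other necessary condition ${f_2^*}'(x)>1$ on $(0,b_2^*)\supset(b_1^*,b_2^*)$. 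Therefore ${f_2^*}''\le 0$ on $(b_1^*,b_2^*)$, and by continuity also at $x=b_1^*$.

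\emph{Step 3 (gluing), and the main obstacle.} Since $f_2^*$ is $C^1$ on $[0,+\infty)$ with ${f_2^*}'(b_2^*)=1$, the derivative ${f_2^*}'$ is continuous at $b_2^*$, nonincreasing on $[b_1^*,b_2^*]$ by Step 2, and identically $1$ on $[b_2^*,+\infty)$; hence ${f_2^*}'$ is nonincreasing on all of $[b_1^*,+\infty)$, i.e.\ $f_2^*$ is concave there. The only genuinely delicate point is Step 1 — justifying the differentiation of the convolution defining $f_3^*$ and verifying the sign of $\det A$ — but both are routine. (Equivalently, one can eliminate $f_3^*$ outright and work with $c{f_2^*}''-\bigl[(\lambda_2+\delta)-\beta c\bigr]{f_2^*}'-\beta\bigl[(\lambda_2+\delta)-\pi_2\lambda_{23}\bigr]f_2^*=\text{affine}$, whose characteristic equation has roots with negative product $-\beta\bigl[(\lambda_2+\delta)-\pi_2\lambda_{23}\bigr]/c<0$, giving the same representation of ${f_2^*}''$.)
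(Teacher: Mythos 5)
Your proof is correct, and it reaches the conclusion by a streamlined variant of the paper's own computation rather than a fundamentally different idea. The paper differentiates the integro-differential equation \eqref{cancave1} (using ${f_1^*}'\equiv 1$ on $[b_1^*,b_2^*)$) to get a third-order ODE and then splits into two cases according to the sign of $c\beta-(\lambda_2+\delta)$: when it is $\le 0$ it runs a direct differential-inequality contradiction, and when it is $>0$ it solves the second-order constant-coefficient equation satisfied by ${f_2^*}''$, whose characteristic polynomial $cr^2+(c\beta-(\lambda_2+\delta))r+\beta(\lambda_{23}\pi_2-(\lambda_2+\delta))$ is exactly yours, concluding from ${f_2^*}''(b_2^*)=0$ and ${f_2^*}'>1$. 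Your elimination of $f_3^*$ (equivalently the planar system in $(f_2^*,f_3^*)$) shows the case split is unnecessary: the product of the roots equals $-\beta(\lambda_{21}+\delta+\pi_1\lambda_{23})/c<0$ unconditionally, so the roots are always real and of opposite sign, and the two-exponential representation of ${f_2^*}''$ together with ${f_2^*}''(b_2^*)=0$, ${f_2^*}'(b_2^*)=1$ and ${f_2^*}'>1$ on $(0,b_2^*)$ gives the sign in one stroke; this is a genuine, if modest, simplification, and your Step 3 gluing matches the paper's closing remark. One small point to patch: the ``at most one zero'' claim in Step 2 tacitly assumes $C_+$ and $C_-$ are not both zero (if exactly one vanishes, ${f_2^*}''$ never vanishes, contradicting ${f_2^*}''(b_2^*)=0$, so that subcase is vacuous); in the degenerate case $C_+=C_-=0$ one has ${f_2^*}''\equiv 0$, which is harmless for concavity and is in any event excluded by ${f_2^*}'>1$ combined with ${f_2^*}'(b_2^*)=1$, so a one-line remark suffices.
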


\begin{proof}Recall that $f_i^*(x)$, $i=1,2$, satisfy the equations
\begin{multline}\label{concave3}
  c{f_1^*}'(x)+\lambda_{12}f_2^*(x)-(\lambda_1+\delta)f_1^*(x)+\lambda_{13}\beta e^{-\beta x} \int_0^x e^{\beta y}(\pi_1 f_1^*(y)+\pi_2f_2^*(y))\mathrm{d}y=0, \\ x\in[0,b_1^*];
\end{multline}
\begin{multline}\label{cancave1}
  c{f_2^*}'(x)+\lambda_{21}f_1^*(x)-(\lambda_2+\delta)f_2^*(x)+\lambda_{23}\beta e^{-\beta x} \int_0^x e^{\beta y}(\pi_1 f_1^*(y)+\pi_2f_2^*(y))\mathrm{d}y=0, \\x\in[0,b_2^*].
\end{multline}
\begin{align*}
 f_1^*(x)=f_1^*(b_1^*)+x-b_1^*,\quad & x\in(b_1^*,+\infty).\\
 f_2^*(x)=f_2^*(b_2^*)+x-b_2^*,\quad & x\in(b_2^*,+\infty).
\end{align*}
Taking the derivative in (\ref{cancave1}), we get
\begin{multline}
 c{f_2^*}''(x)+\lambda_{21}{f_1^*}'(x)+(\beta c-(\lambda_2+\delta)){f_2^*}'(x) +\beta(\lambda_{23}\pi_1+\lambda_{21}){f_1^*}(x)\\ +\beta(\lambda_{23}\pi_2-(\lambda_2+\delta))f_2^*(x)=0,\qquad  x\in[0,{b_2^*}).
\end{multline}
Using ${f_1^*}'(x)=1$,
\begin{multline}\label{concave2}
c{f_2^*}'''(x)+(c\beta-(\lambda_2+\delta)){f_2^*}''(x) +\beta(\lambda_{23}\pi_2-(\lambda_2+\delta)){f_2^*}'(x)\\
+\beta(\lambda_{23}\pi_1+\lambda_{21})=0, \qquad x\in[{b_1^*},{b_2^*}).
\end{multline}
Suppose there exists a point $x\in[{b_1^*},{b_2^*})$ such that ${f_2^*}''(x)>0$.
If $c\beta-(\lambda_2+\delta)\le0$, then
\begin{align*}
c{f_2^*}'''(x)\ge -\beta(\lambda_{23}\pi_1+\lambda_{21})+\beta(\lambda_2+\delta-\lambda_{23}\pi_2){f_2^*}'(x).
\end{align*}
Since ${f_2^*}'(x)> 1$ on $[{b_1^*},{b_2^*})$, we see that $c{f_2^*}'''(x)>\beta \delta$.
Thus, ${f_2^*}''(x)>0$ on $(x,{b_2^*}]$, contradicting that ${f_2^*}''({b_2^*})=0$.
If  $c\beta-(\lambda_2+\delta)>0$, taking the derivative in (\ref{concave2}),
\begin{equation*}
c{f_2^*}''''(x)+(c\beta-(\lambda_2+\delta)){f_2^*}'''(x) +\beta(\lambda_{23}\pi_2-(\lambda_2+\delta)){f_2^*}''(x)=0.
\end{equation*}
Because ${f_2^*}''({b_2^*})=0$, the solution of ${f_2^*}''(x)$ is of the form
\begin{equation*}
{f_2^*}''(x)=A(e^{r_1(x-{b_2^*})}-e^{r_2(x-{b_2^*})}),
\end{equation*}
where $r_2<0<r_1$ are the two roots of
\begin{equation*}
cr^2+(c\beta-(\lambda_2+\delta))r+\beta(\lambda_{23}\pi_2-(\lambda_2+\delta))=0.
\end{equation*}
From the assumption that ${f_2^*}''(x)>0$ for some $x\in [{b_1^*},{b_2^*}),$ we conclude that $A<0$.
Thus, ${f_2^*}''(x)>0$ for all $x\in [{b_1^*},{b_2^*})$.
Then we have ${f_2^*}'(x)<1$ for $x\in[{b_1^*},{b_2^*})$, yielding a contradiction.
Until now, we show that for all $x\in[{b_1^*},{b_2^*}),$ ${f_2^*}''(x)\le 0$.
Because ${f_2^*}(x)={f_2^*}({b_2^*})+x-{b_2^*}$ for $x>{b_2^*}$, we find that $f_2^*(x)$ is concave on $[{b_1^*},+\infty)$.
\end{proof}

\subsection{The Optimality}\label{Optimality}
In this section, we will show that the optimal phase-wise barrier dividend  strategy is optimal among all strategies when the interclaim times follow the 2-order phase-type distribution.

\begin{theorem}
If the interclaim times follow a 2-order phase-type distribution, then the optimal phase-wise barrier strategy's value function $f_i^*(x)$ satisfies the following Hamilton-Jacobi-Bellman (HJB) equation
\begin{equation}\label{phasehjb1}
\max\Big\{c{f_i^*}'(x)+\sum_{j=1,j\neq i}^{3}\lambda_{ij}f_j^*(x) -(\lambda_i+\delta){f_i^*}(x),1-{f_i^*}'(x)\Big\}=0,\quad i=1,2,
\end{equation}
where ${f_3^*}(x)=\beta e^{-\beta x}\int_0^x e^{\beta y}(\pi_1{f_1^*}(y) +\pi_2{f_2^*}(y))\mathrm{d}y$.
\end{theorem}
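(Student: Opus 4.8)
The plan is to verify \eqref{phasehjb1} on each of the three intervals cut out by the two optimal barriers. Since the phases play symmetric roles we may assume $0<b_1^*\le b_2^*$ (relabel otherwise; the case $b_1^*=b_2^*$ is handled below). Write $g_i(x):=c{f_i^*}'(x)+\sum_{j=1,j\neq i}^{3}\lambda_{ij}f_j^*(x)-(\lambda_i+\delta)f_i^*(x)$ for the first entry of the maximum, where $f_3^*$ is as in the statement, so that (by definition) $f_3^*(x)=\beta e^{-\beta x}\int_0^xe^{\beta y}h(y)\,dy$ with $h:=\pi_1f_1^*+\pi_2f_2^*$, hence ${f_3^*}'=\beta(h-f_3^*)$ on $[0,\infty)$. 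By \eqref{concave3}--\eqref{cancave1}, $g_i\equiv0$ on $[0,b_i^*]$, and since ${f_i^*}'\equiv1$ on $[b_i^*,\infty)$ the second entry $1-{f_i^*}'$ vanishes there; thus on each of $[0,b_1^*)$, $[b_1^*,b_2^*)$, $[b_2^*,\infty)$ and for each $i\in\{1,2\}$ one entry of the maximum is identically $0$, and it suffices to bound the other by $0$. The entries $1-{f_i^*}'$ are immediate: below $b_i^*$ the first necessary condition gives ${f_i^*}'>1$, so $1-{f_i^*}'<0$ (this settles $i=1,2$ on $[0,b_1^*)$ and $i=2$ on $[b_1^*,b_2^*)$). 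So the content is: $g_1\le 0$ on $[b_1^*,\infty)$ and $g_2\le 0$ on $[b_2^*,\infty)$.

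Two boundary identities drive everything. First, $g_i(b_i^*)=0$ by continuity, and $g_i'(b_i^*+)=0$: for $x>b_i^*$ one has ${f_i^*}''\equiv0$, so $g_i'(x)=\sum_{j\neq i}\lambda_{ij}{f_j^*}'(x)-(\lambda_i+\delta){f_i^*}'(x)$; letting $x\downarrow b_i^*$ and comparing with the differentiated equation \eqref{concave3} (resp.\ \eqref{cancave1}) at $b_i^*$, namely $c{f_i^*}''(b_i^*-)+\sum_{j\neq i}\lambda_{ij}{f_j^*}'(b_i^*)-(\lambda_i+\delta){f_i^*}'(b_i^*)=0$, together with the second necessary condition ${f_i^*}''(b_i^*)=0$, gives $g_i'(b_i^*+)=0$. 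Second, on $[b_2^*,\infty)$ the functions $f_1^*,f_2^*$, hence $h$, are affine of slope $1$, so ${f_3^*}'$ solves ${f_3^*}''+\beta{f_3^*}'=\beta$, whence ${f_3^*}'(x)=1+e^{-\beta(x-b_2^*)}\big({f_3^*}'(b_2^*)-1\big)$; inserting this in $g_2'(x)=\lambda_{23}({f_3^*}'(x)-1)-\delta$ and using $g_2'(b_2^*+)=0$ forces ${f_3^*}'(b_2^*)=1+\delta/\lambda_{23}$. Consequently on $[b_2^*,\infty)$ we get $g_2'(x)=\delta\big(e^{-\beta(x-b_2^*)}-1\big)\le0$, so $g_2\le g_2(b_2^*)=0$; and $g_1'(x)=\lambda_{13}({f_3^*}'(x)-1)-\delta=\delta\big(\tfrac{\lambda_{13}}{\lambda_{23}}e^{-\beta(x-b_2^*)}-1\big)$ there.

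The core is $g_1\le0$ on $[b_1^*,b_2^*]$; this also yields $g_1(b_2^*)\le0$, which with the previous display finishes $[b_2^*,\infty)$. On $[b_1^*,b_2^*]$, $f_1^*$ is affine of slope $1$ and $f_2^*$ satisfies \eqref{cancave1}; solving \eqref{cancave1} for $f_3^*$ and substituting into $g_1$ eliminates $f_3^*$ and leaves $g_1(x)=c+Af_2^*(x)-B{f_2^*}'(x)-Cf_1^*(x)$ with explicit positive constants $A,B,C$ (e.g.\ $B=c\lambda_{13}/\lambda_{23}$). Differentiating twice and using the third-order equation \eqref{concave2} to eliminate ${f_2^*}'''$, the second-order terms recombine to give
\[
c\,g_1''(x)=\Big(c\lambda_{12}+\tfrac{c^2\beta\lambda_{13}}{\lambda_{23}}\Big){f_2^*}''(x)-B\beta(\lambda_{23}\pi_1+\lambda_{21})\big({f_2^*}'(x)-1\big)-B\beta\delta\,{f_2^*}'(x).
\]
On $(b_1^*,b_2^*)$, Theorem \ref{theoremconcave1} gives ${f_2^*}''\le0$ and the first necessary condition gives ${f_2^*}'>1$, so the right-hand side is $<0$; hence $g_1''<0$ there, and with $g_1'(b_1^*+)=0$ this makes $g_1$ strictly decreasing, so $g_1<0$ on $(b_1^*,b_2^*]$. (As a by-product, $g_1'(b_2^*-)=\delta(\lambda_{13}/\lambda_{23}-1)$ must then be $\le0$, i.e.\ $\lambda_{13}\le\lambda_{23}$ whenever $b_1^*<b_2^*$ — the announced comparison of the barriers; and then $g_1'\le0$ on $[b_2^*,\infty)$ by the previous display, so $g_1\le g_1(b_2^*)\le0$ there.) The degenerate case $b_1^*=b_2^*$ is the same argument with an empty middle interval. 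Collecting the three intervals, the maximum in \eqref{phasehjb1} equals $0$ for all $x\ge0$ and $i=1,2$.

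I expect the main obstacle to be precisely the middle interval $[b_1^*,b_2^*]$ for $i=1$: unlike on $[0,b_1^*)$ (controlled by the first necessary condition) and on $[b_2^*,\infty)$ (controlled by the explicit exponential form of ${f_3^*}'$), here neither entry of the maximum is automatically $\le0$, and one must convert the concavity of $f_2^*$ from Theorem \ref{theoremconcave1} into the sign statement $g_1''<0$ by a somewhat delicate computation built from \eqref{cancave1}, \eqref{concave2}, the structural relations $\lambda_1=\lambda_{12}+\lambda_{13}$ and $\lambda_2=\lambda_{21}+\lambda_{23}$, and both necessary conditions. A secondary point needing care is the regularity of $f_3^*$ at the barriers (it is $C^2$ because $h\in C^1$), which is what legitimizes the matching $g_i'(b_i^*+)=0$.
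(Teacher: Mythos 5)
Your proposal is correct: I checked the elimination identity on $[b_1^*,b_2^*]$ (using $\lambda_2=\lambda_{21}+\lambda_{23}$, $\pi_1+\pi_2=1$, and \eqref{concave2} it does reduce to $c\,g_1''=(c\lambda_{12}+c^2\beta\lambda_{13}/\lambda_{23}){f_2^*}''-B\beta(\lambda_{23}\pi_1+\lambda_{21})({f_2^*}'-1)-B\beta\delta\,{f_2^*}'$ with $B=c\lambda_{13}/\lambda_{23}$), the smooth-fit matchings $g_i(b_i^*)=g_i'(b_i^*+)=0$ are legitimate consequences of ${f_i^*}'(b_i^*)=1$ and ${f_i^*}''(b_i^*)=0$, and the exponential formula ${f_3^*}'(x)=1+e^{-\beta(x-b_2^*)}({f_3^*}'(b_2^*)-1)$ with ${f_3^*}'(b_2^*)=1+\delta/\lambda_{23}$ settles both tails. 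The skeleton (interval decomposition, which entry of the maximum vanishes where, reliance on the two necessary conditions and on Theorem \ref{theoremconcave1}) is the same as the paper's, but the technical core differs: the paper multiplies the HJB residual by $e^{\beta x}$, differentiates, and uses concavity of $f_2^*$ on all of $[b_1^*,\infty)$ to reduce the whole half-line to the single one-point inequality \eqref{phaseopti5} at $b_1^*$, which it then checks from the two differentiated ODEs (its intermediate bound \eqref{phaseopti4} drops a $-\lambda_{21}/\lambda_{23}$ term, a harmless slip since the corrected bound still implies \eqref{phaseopti5}); you instead split at $b_2^*$, eliminate $f_3^*$ via \eqref{cancave1} and prove concavity of the residual $g_1$ itself on the middle interval, then use the explicit form of ${f_3^*}'$ beyond $b_2^*$. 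Your route is computationally heavier on $[b_1^*,b_2^*]$ but buys two things the paper obtains separately or not at all: the ordering $\lambda_{13}\le\lambda_{23}$ when $b_1^*<b_2^*$ falls out as a byproduct (this is Theorem \ref{theoremlambda}, proved independently in the paper), and the degenerate case $b_1^*=b_2^*$ is visible in your framework — though your one-line dismissal of it should be expanded: with an empty middle interval you need the two smooth-fit matchings at the common barrier to force ${f_3^*}'(b)=1+\delta/\lambda_{13}=1+\delta/\lambda_{23}$, hence $\lambda_{13}=\lambda_{23}$, before the tail estimate for $g_1$ goes through. That is a small completeness point, not a gap in the main argument.
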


\begin{proof}
Since for all $x>b_i^*$, ${f_i^*}(x)={f_i^*}(b_i^*)+x-b_i^*$, $i=1,2$ and for all $ 0\le x<b_i^*,$ ${f_i^*}(x)$ satisfies \eqref{concave3} and \eqref{cancave1},  we only need to show that
\begin{equation*}
c{f_i^*}'(x)+\sum_{j=1,j\neq i}^{3}\lambda_{ij}f_j^*(x)-(\lambda_i+\delta){f_i^*}(x)\le 0
\end{equation*}
on $[b_i^*,+\infty)$.
We will prove
\begin{equation*}
c{f_1^*}'(x)+\sum_{j=2}^{3}\lambda_{1j}{f_j^*}(x)-(\lambda_1+\delta){f_1^*}(x)\le 0
\end{equation*}
on $[{b_1^*},+\infty)$ first.
Multiplying by $e^{\beta x}$ gives
\begin{equation*}
c{f_1^*}'(x)e^{\beta x}+\lambda_{12}e^{\beta x}{f_2^*}(x)+\lambda_{13}\beta \int_0^xe^{\beta y}(\pi_1{f_1^*}(y)+\pi_2{f_2^*}(y))\mathrm{d}y-(\delta+\lambda_1)e^{\beta x}{f_1^*}(x).
\end{equation*}
Taking the derivative yields
\begin{multline*}
e^{\beta x}\big[\lambda_{12}{f_2^*}'(x)+(\lambda_{12}\beta+\lambda_{13}\pi_2){f_2^*}(x) +(\lambda_{13}\pi_1\beta-\beta(\delta+\lambda_1)){f_1^*}(x)+c\beta-(\delta+\lambda_1)\big]\\
=:e^{\beta x}g_1(x).
\end{multline*}
Noticing that $g_1({b_1^*})=0$, we only need to show that
\begin{multline*}
g_1'(x)=\lambda_{12}{f_2^*}''(x)+\beta(\lambda_{12}+\lambda_{13}\pi_2){f_2^*}'(x)+\beta(\lambda_{13}\pi_1-(\delta+\lambda_1))\le 0,\\ x\in[{b_1^*},+\infty).
\end{multline*}
Since ${f_2^*}(x)$ is concave on $[{b_1^*},+\infty), $ we only need to show that
\begin{equation}\label{phaseopti2}
(\lambda_{12}+\lambda_{13}\pi_2){f_2^*}'(b_1)\le\delta+\lambda_1-\lambda_{13}\pi_1.
\end{equation}
Recall that
\begin{equation}\label{phaseopti1}
c{f_1^*}'(x)+\lambda_{12}f_2^*(x)+\lambda_{13}f_3^*(x)-(\lambda_1+\delta)f_1^*(x)=0,x\in(0,b_1^*].
\end{equation}
Taking the derivative of \eqref{phaseopti1}, we see that
\begin{equation}
\lambda_{12}{f_2^*}'(b_1)+\lambda_{13}{f_3^*}'(b_1)-(\lambda_1+\delta)=0.
\end{equation}
Combining \eqref{phaseopti2} with \eqref{phaseopti1}, we only need to show that
\begin{equation}\label{phaseopti5}
{f_3^*}'(b_1)\ge \pi_1+\pi_2{f_2^*}'(b_1).
\end{equation}
On the other hand,
\begin{equation}\label{phaseopti3}
c{f_2^*}'(x)+\lambda_{21}f_1^*(x)+\lambda_{23}f_3^*(x)-(\lambda_2+\delta)f_2^*(x)=0, x\in(0,b_2].
\end{equation}
Taking the derivative of \eqref{phaseopti3},
\begin{equation*}
c{f_2^*}''(x)+\lambda_{21}{f_1^*}'(x)+\lambda_{23}{f_3^*}'(x)-(\lambda_2+\delta){f_2^*}'(x)=0,
x\in(0,b_2].
\end{equation*}
Since $f_2^*$ is concave on $[b_1,b_2]$ and ${f_1^*}'(b_1)=1$,
\begin{equation*}
\lambda_{21}{f_1^*}'(b_1)+\lambda_{23}{f_3^*}'(b_1)
-(\lambda_2+\delta){f_2^*}'(b_1)\ge 0,
\end{equation*}
After simplification,
\begin{equation}\label{phaseopti4}
{f_3^*}'(b_1)\ge \frac{(\lambda_2+\delta){f_2^*}'(b_1)}{\lambda_{23}}.
\end{equation}
Now we compare the right-hand side of  both \eqref{phaseopti5} and \eqref{phaseopti4}.
Using the fact that ${f_2^*}'(b_1)\ge 1$, we can easily obtain that
\begin{equation*}
{f_3^*}'(b_1)\ge\frac{(\lambda_2+\delta){f_2^*}'(b_1)}{\lambda_{23}}\ge \pi_1+\pi_2{f_2^*}'(b_1).
\end{equation*}
Until now, we show that \eqref{phaseopti5} holds, eventually
\begin{equation*}
c{f_1^*}'(x)+\sum_{j=2}^{3}\lambda_{1j}{f_j^*}(x)-(\lambda_1+\delta){f_1^*}(x)\le 0,\quad x\in[{b_1^*},+\infty).
\end{equation*}

%

Now we show that
\begin{equation*}
c{f_2^*}'(x)+\lambda_{21}{f_1^*}(x)+\lambda_{23}{f_3^*}(x)-(\lambda_2+\delta){f_2^*}(x)\le 0, \quad x\in[{b_2^*},+\infty).
\end{equation*}
Multiplying by $e^{\beta x}$ gives
\begin{equation*}
ce^{\beta x}{f_2^*}'(x)+\lambda_{21}e^{\beta x}{f_1^*}(x)+\lambda_{23}\beta \int_0^xe^{\beta y}(\pi_1{f_1^*}(y)+\pi_1{f_2^*}(y))dy-(\lambda_2+\delta)e^{\beta x}{f_2^*}(x).
\end{equation*}
Taking the derivative yields
\begin{multline*}
e^{\beta x}(c\beta+\lambda_{21}\beta {f_1^*}(x)+\lambda_{21}+\lambda_{23}\beta (\pi_1{f_1^*}(x)+\pi_2{f_2^*}(x))-(\lambda_2+\delta)\beta {f_2^*}(x)-(\lambda_2+\delta))\\ =:e^{\beta x}g_2(x),
\end{multline*}
here we use that ${f_2^*}''(x)=0$, ${f_1^*}'(x)=1$ on $[{b_2^*},+\infty)$.
Since $g_2({b_2^*})=0$ and
\begin{equation*}
g_2'(x)=\lambda_{21}\beta+\lambda_{23}\beta-(\lambda_2+\delta)\beta<0,
\end{equation*}
we see that $e^{\beta x}g_2(x)\le 0$ on $[{b_2^*},+\infty)$.
Combining this with
\begin{equation*}
c{f_2^*}'({b_1^*})+\lambda_{21}{f_1^*}({b_1^*})+\lambda_{23}{f_3^*}({b_1^*}) -(\lambda_2+\delta){f_2^*}({b_1^*})=0,
\end{equation*}
we see that
\begin{equation*}
c{f_2^*}'(x)+\lambda_{21}{f_1^*}(x)+\lambda_{23}{f_3^*}(x)-(\lambda_2+\delta){f_2^*}(x)\le 0, \quad x\in[{b_2^*},+\infty).
\end{equation*}
Until now, we show that the optimal phase-wise barrier strategy's value function satisfies the HJB equation.
\end{proof}

\begin{theorem}
For exponential distributed claim sizes and 2-order phase-type distributed interclaim times, the optimal phase-wise barrier strategy is the optimal policy among all dividend strategies.
\end{theorem}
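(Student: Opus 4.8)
The plan is to close the argument with the classical verification (``a supersolution of the HJB equation dominates every value function'') scheme. The preceding theorem shows that $f^*_1,f^*_2$ solve the HJB system \eqref{phasehjb1}, and the phase-wise barrier strategy with barriers $\{b^*_1,b^*_2\}$ is admissible, so it suffices to prove
\[
V^D_i(x)\le f^*_i(x)\qquad\text{for every }D\in\mathscr{U}_{ad},\ i=1,2,\ x\ge 0;
\]
then $V_i\le f^*_i\le V_i$, hence $V_i=f^*_i$ and the barrier strategy attains the supremum. Throughout, extend $f^*_i$ by $0$ to $(-\infty,0)$; this is the post-ruin value and is already encoded in $f^*_3(x)=\beta e^{-\beta x}\int_0^x e^{\beta y}(\pi_1 f^*_1(y)+\pi_2 f^*_2(y))\mathrm{d}y=\sum_i\pi_i\mathbb{E}[f^*_i(x-Y)]$.

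Fix $i$, $x\ge0$, $D\in\mathscr{U}_{ad}$; write $X_t=X^D_t$, let $\tau$ be the ruin time and $D^c$ the continuous part of $D$. Since $f^*_1,f^*_2\in C^1[0,\infty)$ and $X$ has no diffusive component, the change-of-variables (Dynkin) formula applied to $e^{-\delta t}f^*_{J_t}(X_t)$ gives
\begin{align*}
e^{-\delta(t\wedge\tau)}f^*_{J_{t\wedge\tau}}(X_{t\wedge\tau})=f^*_i(x)
&+\int_0^{t\wedge\tau}e^{-\delta s}\big(\mathscr{L}f^*_{J_s}(X_s)-\delta f^*_{J_s}(X_s)\big)\,\mathrm{d}s\\
&-\int_0^{t\wedge\tau}e^{-\delta s}{f^*_{J_s}}'(X_s)\,\mathrm{d}D^c_s
+\sum_{0<s\le t\wedge\tau}e^{-\delta s}\big(f^*_{J_s}(X_{s-}-\Delta D_s)-f^*_{J_s}(X_{s-})\big)+M_{t\wedge\tau},
\end{align*}
where $\mathscr{L}$ is the generator of the uncontrolled piecewise-deterministic process (premium drift, phase switches, claims with restart probabilities $\pi_j$, all included, acting on $f^*$ through the $f^*_3$-term), the sum is over the jump times of $D$ (at which $X_s=X_{s-}-\Delta D_s$), and $M$ is a local martingale produced by compensating the phase/claim jumps. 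The first branch of \eqref{phasehjb1} makes the drift integrand $\le0$; the bound ${f^*_i}'\ge1$ on $(0,b^*_i)$ established in Section~\ref{sec:phasewisebarrier} together with ${f^*_i}'=1$ on $[b^*_i,\infty)$ gives ${f^*_{J_s}}'(X_s)\ge1$ and, using that admissibility keeps a dividend lump from making the surplus negative, $f^*_{J_s}(X_{s-}-\Delta D_s)-f^*_{J_s}(X_{s-})\le-\Delta D_s$. Hence
\[
e^{-\delta(t\wedge\tau)}f^*_{J_{t\wedge\tau}}(X_{t\wedge\tau})\le f^*_i(x)-\int_0^{t\wedge\tau}e^{-\delta s}\,\mathrm{d}D_s+M_{t\wedge\tau}.
\]

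I would then localize $M$ by a reducing sequence $\sigma_n\uparrow\infty$, take expectations, and drop the term $e^{-\delta(t\wedge\tau\wedge\sigma_n)}f^*_{J}(X)\ge0$ (nonnegative by the extension-by-zero) to obtain $\mathbb{E}_{ix}\big[\int_0^{t\wedge\tau\wedge\sigma_n}e^{-\delta s}\mathrm{d}D_s\big]\le f^*_i(x)$. Because $X_s\le x+cs$ and $f^*_i$ is Lipschitz, $e^{-\delta s}f^*_{J_s}(X_s)$ is bounded uniformly in $s$, so dominated convergence disposes of the localization and monotone convergence lets $t\to\infty$, yielding $V^D_i(x)=\mathbb{E}_{ix}\big[\int_{0-}^{\tau-}e^{-\delta s}\mathrm{d}D_s\big]\le f^*_i(x)$, which is what we needed. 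The complementary case $b^*_2\le b^*_1$ follows from the identical argument with phases $1$ and $2$ interchanged; this dichotomy is exhaustive by the byproduct $b^*_1<b^*_2\Leftrightarrow\lambda_{23}<\lambda_{13}$ noted earlier.

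The step I expect to be the main obstacle is the careful bookkeeping in the Dynkin formula: pinning down exactly which contributions (phase transitions, claims together with the post-claim restart and the ``value $0$ at ruin'' convention) belong to $\mathscr{L}$ versus the compensated martingale $M$, and rigorously justifying the two passages to the limit (localization of $M$, then $t\to\infty$) with only admissibility of $D$ at hand. A minor additional item is simply recording the symmetric case, since the HJB characterization being invoked was proved under $0<b^*_1<b^*_2$.
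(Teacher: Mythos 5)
Your proposal is correct and follows essentially the same route as the paper: a verification argument applying It\^o/Dynkin's formula to $e^{-\delta(t\wedge\tau)}f^*_{J}(X^D)$, using the HJB supersolution property from the preceding theorem to kill the drift, ${f^*_i}'\ge 1$ to bound the dividend terms, and then passing to the limit to conclude $f^*_i\ge V_i$. Your treatment is merely more explicit about the jump/localization bookkeeping and the relabeled case $b^*_2\le b^*_1$, which the paper handles implicitly.
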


\begin{proof}
Clearly $f_i^*(x)\le V_i(x)$, where $V_i(x)$ is defined in \eqref{phasevaluefunction}.
Let $L_t$ be an arbitrary dividend process and $X_t^L$ denote the corresponding surplus process. Then by It\^o's formula,
\begin{align*}
f_{J_t}^*(X_{\tau\wedge t}^L)e^{-\delta (\tau\wedge t)}=&f_i^*(x)+\int_0^{\tau\wedge t}e^{-\delta s}[-\delta f_{J_s}^*(X_s^L)+\mathscr{L}f_{J_s}^*(X_s^L)]\mathrm{d}s \\
&-\int_0^{\tau\wedge t}{f_{J_s}^*}'(X_s^L)e^{-\delta s}\mathrm{d}L_s,
\end{align*}
where $\tau$ is the ruin time and
\begin{equation*}
\mathscr{L}f_i^*(x)=c{f_i^*}'(x)+\sum_{j=1,j\neq i }^{3}\lambda_{ij}{f_j^*}(x)-\lambda_{i}f_i^*(x)
\end{equation*}
is the infinitesimal generator of the process.
From \eqref{phasehjb1} we conclude that
\begin{align*}
\mathbb{E}\left[f_{J_t}^*(X_{\tau\wedge t}^L)e^{-\delta (\tau\wedge t)}\right]
&\le
f_i^*(x)-\mathbb{E}\left[\int_0^{\tau\wedge t}{f_{J_s}^*}'(X_s^L)e^{-\delta s}\mathrm{d}L_s \Big|J_0=i,X_{0-}^L=x\right]\\
&\le f_i^*(x)-\mathbb{E}\left[\int_0^{\tau\wedge t}e^{-\delta s}\mathrm{d}L_s \Big|J_0=i,X_{0-}^L=x\right].
\end{align*}

Letting $t\rightarrow \infty$ gives
\[f_i^*(x)\ge \mathbb{E}\left[\int_0^{\tau}e^{-\delta s}\mathrm{d}L_s \Big|J_0=i,X_{0-}^L=x\right].\]
Since the strategy $L_t$ is arbitrary,  $f_i^*(x)\ge  V_i(x)$. Now we complete the proof.
\end{proof}
Until now, we see that the 2--layer barrier dividend strategy is optimal when the interclaim times follow 2--order phase--type distribution. In what follows, we will find a way to compare the those two barriers.
\subsection{The comparison of two optimal barriers}
\begin{theorem}\label{theoremlambda}
If $0<{b_1^*}<{b_2^*} $, then $\lambda_{23}>\lambda_{13}$.
\end{theorem}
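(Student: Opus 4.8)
The plan is to reduce the claim to two scalar identities for ${f_3^*}'$ evaluated at the boundary points $b_1^*$ and $b_2^*$, and then to compare them using the monotonicity of ${f_3^*}'$ on $[b_1^*,b_2^*]$. Throughout I use the equations \eqref{concave3}, \eqref{cancave1} for $f_1^*,f_2^*$ (equivalently \eqref{phaseopti1}, \eqref{phaseopti3} in terms of $f_3^*$), the necessary conditions ${f_i^*}'(b_i^*)=1$ and ${f_i^*}''(b_i^*)=0$, the relations $\lambda_1=\lambda_{12}+\lambda_{13}$, $\lambda_2=\lambda_{21}+\lambda_{23}$, and the concavity of $f_2^*$ on $[b_1^*,+\infty)$ from Theorem~\ref{theoremconcave1}. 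One may assume $\lambda_{13}>0$ (else $\lambda_{23}>0=\lambda_{13}$, since absorption must still be reachable from phase~$1$); the value $\lambda_{23}=0$ will be excluded in the course of the argument.

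\emph{Step 1 (two boundary identities).} Differentiating \eqref{phaseopti1} at $x=b_1^*$ and using ${f_1^*}'(b_1^*)=1$, ${f_1^*}''(b_1^*)=0$ and $\lambda_1=\lambda_{12}+\lambda_{13}$ gives $\lambda_{12}({f_2^*}'(b_1^*)-1)+\lambda_{13}({f_3^*}'(b_1^*)-1)=\delta$; since $b_1^*\in(0,b_2^*)$ we have ${f_2^*}'(b_1^*)>1$, hence ${f_3^*}'(b_1^*)-1\le\delta/\lambda_{13}$. Differentiating \eqref{phaseopti3} at $x=b_2^*$ and using ${f_2^*}'(b_2^*)=1$, ${f_2^*}''(b_2^*)=0$, ${f_1^*}'(b_2^*)=1$ (because $f_1^*$ is affine of slope $1$ on $[b_1^*,+\infty)$) and $\lambda_2=\lambda_{21}+\lambda_{23}$ gives $\lambda_{23}{f_3^*}'(b_2^*)=\lambda_{23}+\delta$, i.e.\ ${f_3^*}'(b_2^*)-1=\delta/\lambda_{23}$. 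It therefore suffices to prove the \emph{strict} inequality ${f_3^*}'(b_1^*)>{f_3^*}'(b_2^*)$, for then $\delta/\lambda_{13}\ge{f_3^*}'(b_1^*)-1>{f_3^*}'(b_2^*)-1=\delta/\lambda_{23}$, whence $\lambda_{13}<\lambda_{23}$.

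\emph{Step 2 (reduction to a pointwise comparison).} From $f_3^*(x)=\beta e^{-\beta x}\int_0^x e^{\beta y}(\pi_1 f_1^*(y)+\pi_2 f_2^*(y))\,\mathrm{d}y$ one obtains ${f_3^*}'=-\beta f_3^*+\beta(\pi_1 f_1^*+\pi_2 f_2^*)$ and ${f_3^*}''=-\beta{f_3^*}'+\beta(\pi_1{f_1^*}'+\pi_2{f_2^*}')$; on $[b_1^*,b_2^*]$, where ${f_1^*}'\equiv1$, the latter reads ${f_3^*}''=\beta\bigl(\pi_1+\pi_2{f_2^*}'-{f_3^*}'\bigr)$. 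Since ${f_2^*}'\ge1$ on $[b_1^*,b_2^*]$ and $\pi_1+\pi_2=1$ we have $\pi_1+\pi_2{f_2^*}'\le{f_2^*}'$, so ${f_3^*}''<0$ on $[b_1^*,b_2^*]$ --- and hence ${f_3^*}'(b_1^*)>{f_3^*}'(b_2^*)$, completing Step~1 --- will follow once I establish ${f_3^*}'(x)>{f_2^*}'(x)$ for $x\in[b_1^*,b_2^*)$ (at $x=b_2^*$ one checks directly ${f_3^*}''(b_2^*)=-\beta\delta/\lambda_{23}<0$). The endpoint cases hold: at $b_1^*$, differentiating \eqref{phaseopti3} and using ${f_1^*}'(b_1^*)=1$, the concavity ${f_2^*}''(b_1^*)\le0$, ${f_2^*}'(b_1^*)>1$ and $\lambda_2=\lambda_{21}+\lambda_{23}$ gives $\lambda_{23}\bigl({f_3^*}'(b_1^*)-{f_2^*}'(b_1^*)\bigr)\ge\lambda_{21}\bigl({f_2^*}'(b_1^*)-1\bigr)+\delta{f_2^*}'(b_1^*)>0$, which forces $\lambda_{23}>0$ and ${f_3^*}'(b_1^*)>{f_2^*}'(b_1^*)$; and at $b_2^*$, ${f_3^*}'(b_2^*)=1+\delta/\lambda_{23}>1={f_2^*}'(b_2^*)$.

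\emph{Step 3 (the crossing argument).} This is the step I expect to be the main obstacle. To propagate ${f_3^*}'>{f_2^*}'$ to all of $[b_1^*,b_2^*)$, I would control the sign of the derivative of $w:={f_3^*}'-{f_2^*}'$ at any hypothetical interior zero. If $x_0\in(b_1^*,b_2^*)$ satisfies ${f_3^*}'(x_0)={f_2^*}'(x_0)$, then the formula for ${f_3^*}''$ gives ${f_3^*}''(x_0)=\beta\pi_1\bigl(1-{f_2^*}'(x_0)\bigr)\le0$ (as ${f_2^*}'(x_0)>1$), while differentiating \eqref{phaseopti3} at $x_0$ and substituting ${f_1^*}'(x_0)=1$, ${f_3^*}'(x_0)={f_2^*}'(x_0)$ and $\lambda_2=\lambda_{21}+\lambda_{23}$ gives $c{f_2^*}''(x_0)=\lambda_{21}\bigl({f_2^*}'(x_0)-1\bigr)+\delta{f_2^*}'(x_0)>0$; hence $w'(x_0)<0$ at every such point. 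Since $w>0$ at both $b_1^*$ and $b_2^*$, inspecting the largest zero of $w$ in $(b_1^*,b_2^*)$ --- at which $w'$ would necessarily be $\ge0$ --- shows that $w$ has no zero there, so $w>0$ on $[b_1^*,b_2^*)$. This yields ${f_3^*}''<0$ on $[b_1^*,b_2^*]$, hence ${f_3^*}'(b_1^*)>{f_3^*}'(b_2^*)$, and Step~1 concludes $\lambda_{23}>\lambda_{13}$. The only regularity point to verify en route is that $f_1^*,f_2^*,f_3^*$ are differentiable enough on $[b_1^*,b_2^*]$ for the computations above --- which holds because $f_1^*$ is affine there and $(f_2^*,f_3^*)$ solve a linear ODE system with continuous coefficients.
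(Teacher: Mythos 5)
Your argument is correct, but it takes a genuinely different (and longer) route than the paper. Your two ingredients at $b_1^*$ --- the differentiated equation \eqref{phaseopti1} giving $\lambda_{12}{f_2^*}'(b_1^*)+\lambda_{13}{f_3^*}'(b_1^*)=\lambda_1+\delta$, and the concavity-based inequality obtained by differentiating \eqref{phaseopti3} at $b_1^*$ --- are exactly the paper's \eqref{necessary2} and \eqref{necessary1}. The paper stops there: it solves these two relations for ${f_2^*}'(b_1^*)$ to get the bound \eqref{concave4}, invokes ${f_2^*}'(b_1^*)>1$, and simplifies (using $\lambda_1=\lambda_{12}+\lambda_{13}$, $\lambda_2=\lambda_{21}+\lambda_{23}$) directly to $\lambda_{23}>\lambda_{13}$; everything is local at $b_1^*$ and no information at $b_2^*$ is needed. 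You instead derive a second boundary identity ${f_3^*}'(b_2^*)=1+\delta/\lambda_{23}$ and prove the global statement that ${f_3^*}'$ is strictly decreasing on $[b_1^*,b_2^*]$ via the comparison ${f_3^*}'>{f_2^*}'$ and a no-crossing argument, then compare $\delta/\lambda_{13}$ with $\delta/\lambda_{23}$. This buys a slightly stronger structural by-product (concavity of $f_3^*$ between the barriers and the clean interpretation of ${f_3^*}'$ at each barrier in terms of $\delta/\lambda_{i3}$), at the cost of extra machinery; your handling of the degenerate cases $\lambda_{13}=0$ and $\lambda_{23}=0$ is a point the paper glosses over. One simplification of your own Step 3: at a hypothetical interior zero $x_0$ you compute $c{f_2^*}''(x_0)=\lambda_{21}({f_2^*}'(x_0)-1)+\delta{f_2^*}'(x_0)>0$, which already contradicts the concavity of $f_2^*$ on $[b_1^*,+\infty)$ from Theorem \ref{theoremconcave1}, so the largest-zero/sign-of-$w'$ argument is not needed.
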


\begin{proof}
Denote ${f_3^*}(x)=\beta e^{-\beta x}\int_0^xe^{\beta y}(\sum_{i=1}^2\pi_i f_i^*(y))\mathrm{d}y$.
From (\ref{cancave1}), we see that
\begin{equation*}
c{f_2^*}''(x)+\lambda_{21}-(\lambda_2+\delta){f_2^*}'(x)+\lambda_{23}{f_3^*}'(x)=0, x\in[{b_1^*},{b_2^*}].
\end{equation*}
Combining the concavity of ${f_2^*}(x)$ on $[{b_1^*},{b_2^*}]$, we see
\begin{equation}\label{necessary1}
\lambda_{21}-(\lambda_2+\delta){f_2^*}'({b_1^*})+\lambda_{23}{f_3^*}'({b_1^*})\ge 0.
\end{equation}
On the other hand, from (\ref{concave3}),
\begin{equation}\label{necessary2}
\lambda_{12}{f_2^*}'({b_1^*})+\lambda_{13}{f_3^*}'({b_1^*})-(\lambda_1+\delta)=0.
\end{equation}
Combining (\ref{necessary1}) with (\ref{necessary2}), we see that
\begin{equation}\label{concave4}
{f_2^*}'({b_1^*})\le \frac{\lambda_{13}\lambda_{21}+(\lambda_1+\delta)\lambda_{23}}{(\lambda_2+\delta)\lambda_{13}+\lambda_{23}\lambda_{12}}.
\end{equation}
Since ${f_2^*}'({b_1^*})>1$, we know $\frac{\lambda_{13}\lambda_{21}+(\lambda_1+\delta)\lambda_{23}}{(\lambda_2+\delta)\lambda_{13}+\lambda_{23}\lambda_{12}}>1.$
After simplification, we see that $\lambda_{23}>\lambda_{13}$.
\end{proof}
The above theorem shows that when the interclaim times are 2--order phase-type distributed, the phase with the  higher barrier must be the phase with the higher intensity to claim.

Now we show some numerical examples for the case of 2--order phase-type distributed interclaim times. The algorithm is shown in the next section.
\begin{example}\label{exam1}
Let the interclaim times follow a 2-order phase-type distribution.
The state $3$ is the absorption state of the Markov process $\{J_t\}$, which means, once the Markov process enters the state 3, the claim occurs. The intensity matrix $\mathbf{\Lambda}$ is
\begin{equation*}
\left(
  \begin{array}{ccc}
      -\lambda_{1} & \lambda_{12} &\lambda_{13}\\
        \lambda_{21} & -\lambda_{2}&\lambda_{23} \\
  \end{array}
\right)=\left(\begin{array}{ccc}
 -10 & 5&5 \\ 4 & -12 & 8  \\
 \end{array}\right).
\end{equation*}

And $c=15$, $\delta=0.1$, $\beta=1$, $(\pi_1,\pi_2)=(0.4,0.6)$.
Table \ref{tab:2order1} shows  $b_1^*<b_2^*$.
\begin{table}[h]
  \centering
  \begin{tabular}{lcc}
  \hline
Phase $i$ & 1 & 2 \\\hline
  Optimal phase-wise barrier $b_i^*$& 11.779 & 12.219 \\
  \hline
  \end{tabular}
  \caption{$b^*_i$ for a 2-order case with $\lambda_{23}>\lambda_{13}$.}
  \label{tab:2order1}
\end{table}
The functions $f_1^*(x)$ and $f_2^*(x)$ are shown in Figure \ref{fig:2order1}.
\end{example}

Another numerical experiment for 2-order case is shown next for what happens if the intensity to the next claim  is the same.

\begin{example} \label{exam2}
Let the interclaim times follow a 2-order phase-type distribution, and $c=15$, $\delta=0.1$, $\beta=1$, $(\pi_1,\pi_2)=(0.4,0.6)$. The intensity matrix $\mathbf{\Lambda}$ is
\begin{equation*}
\left(\begin{array}{ccc}
    -\lambda_{1} &\lambda_{12} &\lambda_{13}\\
    \lambda_{21} & -\lambda_{2} &\lambda_{23}\\
\end{array}\right)
=\left(\begin{array}{ccc}
 -8 & 3 &5 \\ 1 & -6  & 5\\
 \end{array}\right),
\end{equation*}
\begin{table}[h]
  \centering
  \begin{tabular}{lcc}
  \hline
  State $i$ & 1 & 2 \\ \hline
  Optimal phase-wise barrier $b_i^*$ &10.738  & 10.738 \\
  \hline
  \end{tabular}
  \caption{$b^*_i$ for a 2-order case with $\lambda_{13}=\lambda_{23}$.}
  \label{tab:2order2}
\end{table}
From Table \ref{tab:2order2}, we see that  the barriers are  the same.
The equality of $f_1^*(x)$ and $f_2^*(x)$ is shown in the Figure \ref{fig:2order2}.
\begin{figure}[htb]
\centering
\begin{minipage}[t]{0.48\textwidth}
 \center{\includegraphics[width=\textwidth]{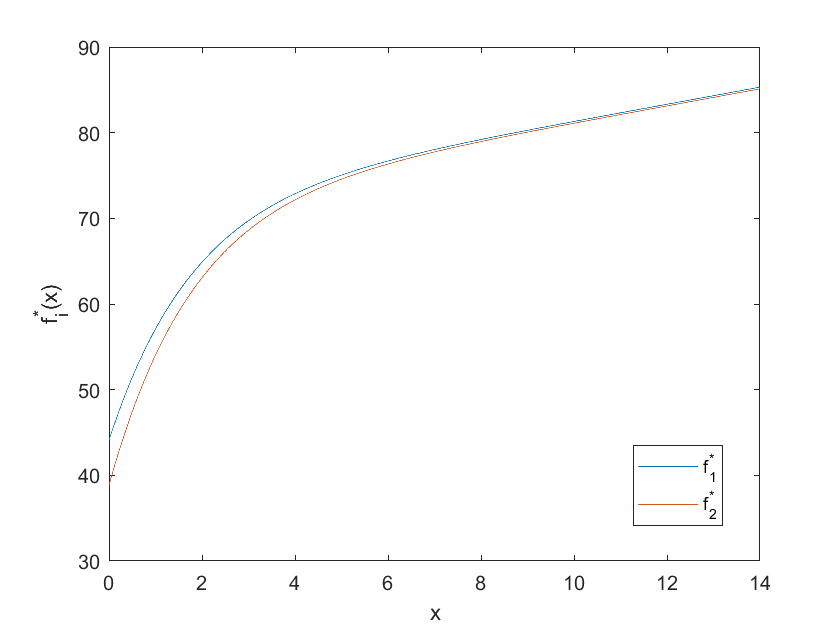}}
 \caption{$f_i^*(x)$ for $\lambda_{23}>\lambda_{13}$.}\label{fig:2order1}
\end{minipage}
\begin{minipage}[t]{0.48\textwidth}
 \center{\includegraphics[width=\textwidth]{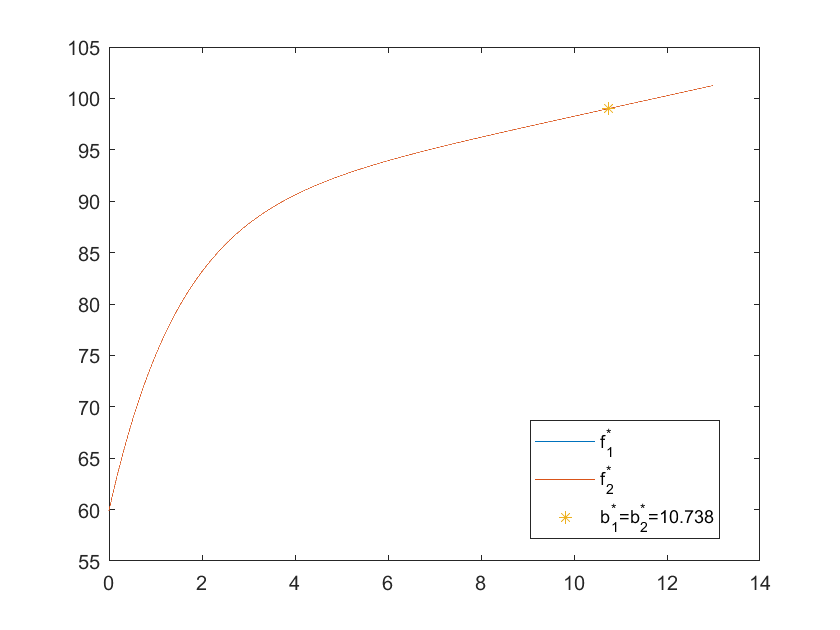}}
 \caption{$f_i^*(x)$ for $\lambda_{13}=\lambda_{23}$.}\label{fig:2order2}
\end{minipage}
\end{figure}
\end{example}

\begin{remark}
When the interclaim times are 2-order phase-type distributed, we can also compare the size of the barrier by  comparing different phases' the expected time to the next claim.
 Recall that for a given phase $i$, the expected time of the next claim $\mathcal{T}_i$ is, confer \cite{AsmussenAlbrecher2010} (Page 256, Theorem 1.5-(d)),
\begin{equation}
\mathcal{T}_i=\alpha_i \mathbf{T}^{-1}\mathbf{e},
\end{equation}
where $\alpha_i=(0,0,\cdots,0, -1,0,\cdots,0)$ is an $n$-dimensional vector with $-1$ being the $i$th term, $\mathbf{T}$ is the subintensity matrix of $\mathbf{\Lambda}$ restricted to $E$ and $\mathbf{e}$ is the column vector with all components equal to one.

 If $\lambda_{23}\ge \lambda_{13}$, then
 \begin{align*}
\mathcal{T}_1-\mathcal{T}_2&=
\Big(\begin{array}{cc}
            -1 & 1 \\
\end{array}\Big)\cdot\mathbf{T}^{-1}\cdot
\left(\begin{array}{c}
             1 \\ 1
\end{array}\right)
\\&
=\frac{1}{\lambda_1\lambda_2-\lambda_{12}\lambda_{21}}
\Big(\begin{array}{cc}
            -1 & 1 \\
\end{array}\Big)\cdot
\left(\begin{array}{c}
   -\lambda_2-\lambda_{12} \\
   -\lambda_{21}-\lambda_1
\end{array}\right)\\
&=\frac{1}{\lambda_1\lambda_2-\lambda_{12}\lambda_{21}}(\lambda_{23}-\lambda_{13})\ge 0.
\end{align*}
Eventually, combining this with Theorem \ref{theoremlambda}, we get that if $0<{b_1^*}<{b_2^*} $, then $\mathcal{T}_1\ge \mathcal{T}_2$, which shows that for different phases, except by comparing the intensity to the claim to compare the size of barriers,   we can also compare the expected time to the next claim to obtain which barrier is larger.
\end{remark}

\section{n-order ($n\ge 3$) phase-type distributed interclaim times}
\label{sec:nordercase}
Since there are too many parameters in the case of $n$--order $(n\ge 3)$, a similar calculation like Theorem \ref{theoremconcave1} is almost impossible. Thus, we numerically explore the optimal policy. We present an algorithm first and then we show that the convergence of the algorithm. By the numerical exploration, we get that the optimal phase-wise barrier strategy is optimal among all admissible dividend policies.
\subsection{Algorithm}
We use the iteration algorithm which is similar with the ones used in \cite{Albrecher2017} and \cite{Liu2020Dividend}.

\begin{enumerate}[Step 1.]
  \item Set $k=0$ and initialize with $V^{(0)}_i\equiv 0$, $i=1,2,\cdots,n+1$.
  \item Set $k=k+1$. For given $V_i^{(k-1)}$, we look for the optimal dividend strategy prior to $\tau_1=\inf\{t\geq 0:\bar{J}_t\neq J_0\}$ with terminal cost,
      \begin{equation}\label{eq:ValueFunctionK}
        V_i^{(k)}(x)=\sup_{D\in\mathscr{U}_{ad}}\mathbb{E}\Big[\int_{0-}^{\tau_1-} e^{-\delta t}\mathrm{d}D_t +e^{-\delta\tau_1}V_{\bar{J}_{\tau_1}}^{(k-1)}(X^D_{\tau_1}) \Big|J_0=i,X^D_{0-}=x\Big]
      \end{equation}
      for $i=1,2,\cdots,n$.
      The corresponding HJB equation is
      \begin{equation}
        \max\Big\{c{V_i^{(k)}}'(x)-(\lambda_i+\delta){V_i^{(k)}}(x)+\sum_{j=1,j\neq i}^{n+1}\lambda_{ij}V_j^{(k-1)}(x) ,1-{V_i^{(k)}}'(x)\Big\}=0.
      \end{equation}
  \item If $\max_i\sup_{x\geq 0}|V_i^{(k)}(x)-V_i^{(k-1)}(x)|<\textrm{tolerance}$ for $k>1$ stop; otherwise, go to Step 2.
\end{enumerate}

Actually, in Step 2, we find that the optimal strategy is a phase-wise barrier strategy with barriers
      \begin{equation}\label{eq:barrier}
        b_i^{(k)}=\mathop{\arg\max}_{x\ge 0}\left\{\frac{c+\sum_{j=1,j\neq i}^{n+1}\lambda_{ij}V_j^{(k-1)}(x)}{\lambda_i+\delta}-x\right\},\quad i=1,2,\cdots,n.
      \end{equation}
      And the corresponding functions $V_i^{(k)}$ are obtained by
      \begin{equation}
        V_i^{(k)}(x)=
        \begin{cases}
          \displaystyle e^{-\frac{\lambda_i+\delta}{c}(b_i^{(k)}-x)} V_i^{(k)}(b_i^{(k)}) &  \\
          \displaystyle \quad + \frac{1}{c}\int_x^{b_i^{(k)}}e^{-\frac{\lambda_i+\delta}{c}(u-x)} \Big(\sum_{\substack{j=1\\j\neq i}}^{n+1} \lambda_{ij}V_j^{(k-1)}(u)\Big)\mathrm{d}u, & x<b_i^{(k)}; \\
          \displaystyle\frac{c+\sum_{j=1,j\neq i}^{n+1}\lambda_{ij}V_j^{(k-1)}(x)}{\lambda_i+\delta}, & x=b_i^{(k)}; \\
          V_i^{(k)}(b_i^{(k)})+x-b_i^{(k)}, & x>b_i^{(k)}, \\
        \end{cases}
      \end{equation}
      for $i=1,2,\cdots,n$, and
      \begin{equation}
        V_{n+1}^{(k)}(x)=\beta e^{-\beta x}\int_0^xe^{\beta y}\left(\sum_{i=1}^n\pi_iV_i^{(k)}(y)\right)\mathrm{d}y.
      \end{equation}
After we present the algorithm, we show the convergence of the algorithm. The proof contains two parts, the first part (more specifically, Proposition \ref{valuefunctionconvergence})proves that $\{V_i^{(k)}\}_{i=1}^n$  converges to the optimal value function $\{V_i\}_{i=1}^n$, the second part (Proposition \ref{convegencebarrier}) proves that iteration strategy $\{b_i^{(k)}\}_{i=1}^n$ convergence to the optimal policy $\{b_i^*\}_{i=1}^n$.
\subsection{The convergence of the algorithm}
\begin{lemma}\label{lem:Vk.increasing}
  We have $V_i^{(0)}\leq V_i^{(1)}\leq \cdots \leq V_i^{(k)}\leq \cdots \leq V_i$ for $i=1,2,\cdots,n$.
\end{lemma}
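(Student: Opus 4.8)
The plan is to proceed by induction on $k$, establishing the chain $V_i^{(k-1)}\le V_i^{(k)}$ for all $i$ simultaneously, and then separately the bound $V_i^{(k)}\le V_i$. For the first inequality, the base case $V_i^{(0)}\equiv 0\le V_i^{(1)}$ is immediate, since $V_i^{(1)}$ is a supremum over admissible strategies of a nonnegative functional (one may simply take $D\equiv 0$). For the inductive step, assume $V_j^{(k-1)}\le V_j^{(k)}$ for all $j=1,\dots,n$, and note this propagates to $j=n+1$ as well, because $V_{n+1}^{(m)}(x)=\beta e^{-\beta x}\int_0^x e^{\beta y}\big(\sum_{i=1}^n\pi_i V_i^{(m)}(y)\big)\mathrm{d}y$ is monotone in the functions $V_i^{(m)}$. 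Now compare the definitions \eqref{eq:ValueFunctionK} of $V_i^{(k+1)}$ and $V_i^{(k)}$: both are the supremum over the same class $\mathscr{U}_{ad}$ of the same running-dividend term plus a terminal cost $e^{-\delta\tau_1}V_{\bar J_{\tau_1}}^{(\cdot)}(X^D_{\tau_1})$, the only difference being that the terminal cost for $V_i^{(k+1)}$ uses $V_{\bar J_{\tau_1}}^{(k)}$ while that for $V_i^{(k)}$ uses $V_{\bar J_{\tau_1}}^{(k-1)}$. Since $V_{\bar J_{\tau_1}}^{(k-1)}\le V_{\bar J_{\tau_1}}^{(k)}$ pointwise by the induction hypothesis, the integrand for $V_i^{(k+1)}$ dominates that for $V_i^{(k)}$ for every fixed $D$, and taking suprema preserves the inequality; hence $V_i^{(k)}\le V_i^{(k+1)}$.

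For the upper bound $V_i^{(k)}\le V_i$, I would again induct on $k$, using the dynamic-programming interpretation of \eqref{eq:ValueFunctionK}. The base case $V_i^{(0)}\equiv 0\le V_i$ holds since $V_i\ge V_i^{D\equiv 0}=0$. For the step, assume $V_j^{(k-1)}\le V_j$ for all $j$ (again extending to $j=n+1$ via the monotonicity of the $V_{n+1}$-operator, together with the fact that $V_{n+1}$ built from the true value functions $V_i$ equals $\sum_i\pi_i\mathbb{E}[V_i(x-Y)]$, the post-claim continuation value). Then for any admissible $D$,
\[
\mathbb{E}\Big[\int_{0-}^{\tau_1-}e^{-\delta t}\mathrm{d}D_t+e^{-\delta\tau_1}V_{\bar J_{\tau_1}}^{(k-1)}(X^D_{\tau_1})\Big|J_0=i,X^D_{0-}=x\Big]
\le \mathbb{E}\Big[\int_{0-}^{\tau_1-}e^{-\delta t}\mathrm{d}D_t+e^{-\delta\tau_1}V_{\bar J_{\tau_1}}(X^D_{\tau_1})\Big],
\]
and the right-hand side is bounded by $V_i(x)$ because running $D$ up to $\tau_1$ and then switching to an (arbitrarily near-)optimal strategy from the state $(\bar J_{\tau_1},X^D_{\tau_1})$ is itself an admissible strategy, so its value cannot exceed the supremum $V_i(x)$; the Bellman-type consistency that $V_i$ satisfies exactly this one-step relation (as a fixed point rather than a strict inequality) is what closes the argument. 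Taking the supremum over $D$ yields $V_i^{(k)}\le V_i$.

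The main obstacle, and the only point requiring genuine care rather than bookkeeping, is the rigorous justification of the dynamic-programming step in the upper-bound argument: namely that concatenating the strategy $D$ on $[0,\tau_1)$ with a near-optimal strategy after the first phase transition produces a legitimate element of $\mathscr{U}_{ad}$ (predictability, non-decrease, the admissibility constraint $D_t\le X_t$ across the splice point) and that the tower property applies to decompose $V_i(x)$ at the stopping time $\tau_1$. This uses the strong Markov property of the joint process $(\bar J_t, X_t^D)$ at $\tau_1$ and the measurable-selection/approximation of optimal strategies; all of it is standard, but it is where the substance of the proof lies. The monotonicity direction, by contrast, is purely a matter of comparing two suprema with ordered integrands and needs nothing beyond the induction hypothesis.
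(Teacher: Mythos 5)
Your proposal is correct and follows essentially the same route as the paper: the monotonicity $V_i^{(k-1)}\le V_i^{(k)}$ is proved by exactly the paper's induction, comparing the suprema in \eqref{eq:ValueFunctionK} with pointwise-ordered terminal costs (your base case via $D\equiv 0$ is a harmless simplification of the paper's explicit computation $V_i^{(1)}(x)=x+c/(\lambda_i+\delta)$). The only difference is that you also spell out the bound $V_i^{(k)}\le V_i$ via the dynamic-programming relation \eqref{eq:V.funEq}, which the paper merely asserts in this lemma and effectively justifies only later, in the argument of Proposition \ref{convegencebarrier}; your version is consistent with that and fills in the concatenation/DPP details the paper leaves implicit.
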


\begin{proof}
  We prove the result by induction.

  (i) It follows from $V_i^{(0)}\equiv 0$ that the optimal strategy is barrier strategy with $b_i^{(1)}=0$, i.e.,
  \begin{equation}
  V_i^{(1)}(x)=\sup_{D\in\mathscr{U}_{ad}}\mathbb{E}\Big[\int_{0-}^{\tau_1-} e^{-\delta t}\mathrm{d}D_t\Big|J_0=i,X^D_{0-}=x\Big] =x+\frac{c}{\lambda_i+\delta}.
  \end{equation}
  We have $V_i^{(0)}\leq V_i^{(1)}$.

  (ii) Assume that $V_i^{(k-2)}\leq V_i^{(k-1)}$. By (\ref{eq:ValueFunctionK}), we have
  \begin{align*}
    V_i^{(k)}(x) \geq & \sup_{D\in\mathscr{U}_{ad}}\mathbb{E}\Big[\int_{0-}^{\tau_1-} e^{-\delta t}\mathrm{d}D_t +e^{-\delta\tau_1}V_{\bar{J}_{\tau_1}}^{(k-2)}(X^D_{\tau_1})\Big|J_0=i,X^D_{0-}=x\Big] \\
    =&V_i^{(k-1)}(x).
  \end{align*}
\end{proof}

\begin{proposition}\label{valuefunctionconvergence}
  It holds that $V_i^{(k)}\to V_i$ as $k\to\infty$ for $i=1,2,\cdots,n$.
\end{proposition}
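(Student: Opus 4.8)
The plan is to establish the convergence $V_i^{(k)}\to V_i$ by combining the monotonicity already proved in Lemma \ref{lem:Vk.increasing} with two inequalities: an upper bound showing the limit does not overshoot $V_i$, and a dynamic-programming argument showing it is not strictly below $V_i$. First I would note that by Lemma \ref{lem:Vk.increasing} the sequence $V_i^{(k)}(x)$ is nondecreasing in $k$ and bounded above by $V_i(x)$ for every fixed $x$, hence the pointwise limit $\bar V_i(x):=\lim_{k\to\infty}V_i^{(k)}(x)$ exists and satisfies $\bar V_i(x)\le V_i(x)$. (If one wants uniform convergence, one first records the a priori linear growth bound $V_i^{(k)}(x)\le x+c/\delta$, which follows from the explicit formula in Step 2 together with the fact that paying everything immediately can never lose more than the ruin-free discounted premium; Dini-type or explicit-rate arguments then upgrade pointwise to uniform, but pointwise suffices for the identification of the limit.)

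The substantive direction is $\bar V_i(x)\ge V_i(x)$. Here I would argue directly from the definition of $V_i$ as a supremum over admissible strategies. Fix $\varepsilon>0$ and an admissible $D$ with $V_i^D(x)\ge V_i(x)-\varepsilon$. Decompose the reward of $D$ according to the successive phase-transition epochs $0=\sigma_0<\sigma_1<\sigma_2<\cdots$ of $\bar J$. By the strong Markov property at these epochs, the $m$-step truncation of $D$ — that is, follow $D$ but stop accruing reward after $\sigma_m$ — has value that, by an induction on $m$ comparing against the recursion \eqref{eq:ValueFunctionK}, is dominated by $V_i^{(m)}(x)$: the one-step problem \eqref{eq:ValueFunctionK} is exactly the supremum over the behaviour of a strategy up to the first phase change with terminal value $V^{(m-1)}$, so feeding in the $(m-1)$-truncation of $D$ restarted at $\sigma_1$ gives $\mathbb{E}[\cdots]\le V_i^{(m)}(x)$. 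Letting $m\to\infty$, the truncated rewards increase to $V_i^D(x)$ by monotone convergence (all reward increments $e^{-\delta t}\mathrm dD_t$ are nonnegative and $\sigma_m\to\infty$ a.s. since phase sojourns are exponential with bounded parameters and are eventually interrupted by claims whose interarrival times have finite mean), so $V_i^D(x)\le\bar V_i(x)$. Since $\varepsilon$ was arbitrary, $V_i(x)\le\bar V_i(x)$, and together with the reverse inequality this yields $\bar V_i=V_i$.

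The main obstacle I expect is making the truncation–and–restart step fully rigorous: one must check that the $(m-1)$-truncation of $D$, shifted to start at the random time $\sigma_1$, is again admissible for the restarted problem (predictability and the budget constraint $D_t\le X_t^D$ are preserved under this operation because $\sigma_1$ is a stopping time and the surplus restarts at $X^D_{\sigma_1}$), and that the terminal term $e^{-\delta\sigma_1}V^{(m-1)}_{\bar J_{\sigma_1}}(X^D_{\sigma_1})$ genuinely bounds the tail reward of $D$ after $\sigma_1$ — which is precisely the induction hypothesis. A secondary technical point is the a.s. divergence $\sigma_m\to\infty$; this is immediate since between consecutive claims there are at most finitely many phase changes in distribution is not true, but the sojourn in each phase is $\mathrm{Exp}(\lambda_i)$ with $\lambda_i$ uniformly bounded, so $\sigma_m$ is a sum of $m$ independent exponentials with bounded rates and hence tends to infinity almost surely. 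Once these two points are in place, the monotone convergence theorem closes the argument without any further estimates.
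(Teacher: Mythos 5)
Your proposal is correct in substance, and its key limiting step is genuinely different from the paper's. Both arguments share the same core induction along the phase-transition epochs: the one-step problem \eqref{eq:ValueFunctionK} dominates the reward of any strategy up to the first jump plus the $(k-1)$-st iterate at the restart point, which is exactly how the paper proves $V_i^{D^k}\le V_i^{(k)}$ for its shifted strategies and how you prove that the reward of an arbitrary admissible $D$ truncated at $\sigma_m$ is at most $V_i^{(m)}(x)$. Where you diverge is in how the truncation error is removed. The paper fixes $\varepsilon$, takes an $\varepsilon/3$-optimal $D^\varepsilon$, modifies it into $D^N$ (no dividends after $\tau_N$), and controls the loss $V_i^{D^\varepsilon}-V_i^{D^N}$ quantitatively by splitting on $\{\tau_N\wedge\tau\ge T\}$ versus $\{\tau_N<T\}$, using the linear growth bound $V_i(x)\le x+c/\delta$ to kill the discounted tail and choosing $N$ so that $\mathbb{P}\{\tau_N<T\}$ is small; this yields $V_i\le V_i^{(N)}+\varepsilon$. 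You instead truncate the \emph{reward} rather than the strategy and let $m\to\infty$ by monotone convergence, using only that $\sigma_m\to\infty$ a.s.; this dispenses with the $T$, $K$, $N$ bookkeeping (and in fact with the $\varepsilon$-optimal strategy altogether, since your bound holds for every admissible $D$ and one can take the supremum at the end), and it avoids the paper's somewhat awkward constant $K=\min_iV_i(x+pT)$. What the paper's route buys in exchange is an explicit, quantitative $N(\varepsilon)$ after which $V_i^{(k)}$ is $\varepsilon$-close to $V_i$. Two small caveats on your write-up: the sentence justifying $\sigma_m\to\infty$ is garbled (the clean statement is that, conditionally on the phase sequence, the inter-jump times are independent exponentials with rates bounded by $\max_i\lambda_i$, so $\sigma_m$ stochastically dominates a sum of i.i.d.\ exponentials and diverges a.s.); and the restart-at-$\sigma_1$ admissibility/measurability point you flag is indeed the delicate step, though the paper's own induction with $D^k_t=D^N_{\tau_{N-k}+t}$ glosses over exactly the same issue, so you are not losing rigor relative to the published proof.
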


\begin{proof}
  For any admissible strategy $D\in\mathscr{U}_{ad}$, we have
  \begin{equation*}
    D_t\leq x+ct.
  \end{equation*}
  Then
  \begin{equation*}
    V_i(x)=\sup_{D\in\mathscr{U}_{ad}}V_i^D(x)\leq x+\int_0^\infty ce^{-\delta t}\mathrm{d}t=x+\frac{c}{\delta}.
  \end{equation*}
  So there exists $T>0$ such that
  \begin{equation}\label{ieq:1}
    e^{-\delta t}V_i(x+ct)<\frac{1}{3}\varepsilon\quad\hbox{for}\quad t>T.
  \end{equation}
  Let us define $K=\min_iV_i(x+pT)$ and take $N>0$ such that
  \begin{equation}\label{ieq:2}
    \mathbb{P}\{\tau_N\geq T\}\geq 1-\frac{\varepsilon}{3K}.
  \end{equation}
  There exists an admissible strategy $D^\varepsilon\in\mathscr{U}_{ad}$ such that
  \begin{equation}\label{ieq:3}
    V_i(x)-V_i^{D^\varepsilon}(x)<\frac{1}{3}\varepsilon.
  \end{equation}

 Denote  $\tau_N$ the $N^{th}$ jump time of the Markov process $\{J_t\}$ (The claim counts as one jump). We define the strategy $D^N$ as $D^N_t=D^\varepsilon_t$ for $t\leq\tau_N\land\tau$, and $D^N_t=D^\varepsilon_{\tau_N}$ for $t>\tau_N$ if $\tau_N<\tau$.
  From (\ref{ieq:1}) and (\ref{ieq:2}), we have
  \begin{align*}
    &V_i^{D^\varepsilon}(x)-V_i^{D^N}(x) \\
    =&\mathbb{E}_{ix}\Big[\int_{\tau_N\land\tau-}^{\tau-} e^{-\delta t} \mathrm{d}D^\varepsilon_t\Big]
    \leq \mathbb{E}_{ix}\Big[e^{-\delta(\tau_N\land\tau)}V_{\bar{J}_{\tau_N\land\tau}} (X^{D^\varepsilon}_{\tau_N\land\tau})\Big] \\
    \leq & \mathbb{E}_{ix}\Big[\mathbf{1}_{\{\tau_N\land\tau\geq T\}} e^{-\delta(\tau_N\land\tau)}V_{\bar{J}_{\tau_N\land\tau}} (x+c(\tau_N\land\tau))\Big] \\
    &+\mathbb{E}_{ix}\Big[\mathbf{1}_{\{\tau_N\leq T\}} V_{\bar{J}_{\tau_N\land\tau}} (x+c(\tau_N\land\tau))\Big] \\
    \leq & \mathbb{E}_{ix}\Big[\mathbf{1}_{\{\tau_N\land\tau\geq T\}} e^{-\delta(\tau_N\land\tau)}V_{\bar{J}_{\tau_N\land\tau}} (x+c(\tau_N\land\tau))\Big] + K\mathbb{P}\{\tau_N < T\} \\
    < & \frac{2}{3}\varepsilon.
  \end{align*}
  The we obtain from (\ref{ieq:3}),
  \begin{equation}\label{ieq:4}
    V_i(x)\leq V_i^{D^\varepsilon}(x)+\frac{1}{3}\varepsilon <V_i^{D^N}(x)+\varepsilon.
  \end{equation}

  If we define a sequence of strategies $D^1, D^2, \cdots, D^{N-1}$ as $D^k_t=D^N_{\tau_{N-k}+t}$, we can see that
  \begin{equation*}
    V_i^{D^k}(x)=\mathbb{E}_{ix}\Big[\int_{0-}^{\tau_1-} e^{-\delta t}\mathrm{d}D^k_t +e^{-\delta\tau_1}V_{\bar{J}_{\tau_1}}^{D^{k-1}}(X^{D^k}_{\tau_1})\Big] \quad\hbox{for}\quad k=1,\cdots,N
  \end{equation*}
  with $V_i^{D^0}=V^{(0)}\equiv 0$. These imply that $V_i^{D^1}(x)\leq V_i^{(1)}(x)$.
  Assume that $V_i^{D^{k-1}}(x)\leq V_i^{(k-1)}(x)$. We have
  \begin{align*}
    V_i^{D^k}(x)=&\mathbb{E}_{ix}\Big[\int_{0-}^{\tau_1-} e^{-\delta t}\mathrm{d}D^k_t +e^{-\delta\tau_1}V_{\bar{J}_{\tau_1}}^{D^{k-1}}(X^{D^k}_{\tau_1})\Big] \\
    \leq & \mathbb{E}_{ix}\Big[\int_{0-}^{\tau_1-} e^{-\delta t}\mathrm{d}D^k_t +e^{-\delta\tau_1}V_{\bar{J}_{\tau_1}}^{(k-1)}(X^{D^k}_{\tau_1})\Big]
    \leq V_i^{(k)}(x).
  \end{align*}
  Hence we obtain
  \begin{equation*}
    V_i^{D^N}(x)\leq V_i^{(N)}(x).
  \end{equation*}
  Following from (\ref{ieq:4}) and Lemma \ref{lem:Vk.increasing}, we have
  \begin{equation*}
    V_i(x)<V_i^{(N)}(x)+\varepsilon\leq V_i^{(k)}(x)+\varepsilon \quad\hbox{for}\quad k>N.
  \end{equation*}
  On the other hand, we have $V_i(x)\geq V_i^{(k)}(x)$.
  This completes the proof.
\end{proof}

\begin{proposition}\label{convegencebarrier}
  The optimal value function $V_i$ is the minimal nonnegative solution of the equation
  \begin{equation}\label{eq:V.funEq}
    V_i(x)=\sup_{D\in\mathscr{U}_{ad}}\mathbb{E}\Big[\int_{0-}^{\tau_1-} e^{-\delta t}\mathrm{d}D_t +e^{-\delta\tau_1}V_{\bar{J}_{\tau_1}}(X^D_{\tau_1}) \Big|J_0=i,X^D_{0-}=x\Big].
  \end{equation}
  If there exists an optimal phase-wise barrier strategy with $\{b_i^*\}_{i=1}^n$ prior to $\tau_1$ for the problem (\ref{eq:ValueFunctionK}) after replacing $V_i^{(k-1)}$ by $V_i$, then the phase-wise barrier strategy with $\{b_i^*\}_{i=1}^n$ is optimal for the optimization problem (\ref{phasevaluefunction}).
\end{proposition}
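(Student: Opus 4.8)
The plan is to establish the proposition in two parts. The first shows that $V_i$ satisfies the functional equation \eqref{eq:V.funEq} and is its minimal nonnegative solution; the second deduces optimality of the \emph{global} phase-wise barrier strategy from the assumed optimality of $\{b_i^*\}_{i=1}^n$ in the single-step problem. It is convenient to introduce, on vectors $W=(W_1,\dots,W_n)$ of nonnegative functions, the operator
\[
(\Phi W)_i(x)=\sup_{D\in\mathscr{U}_{ad}}\mathbb{E}\Big[\int_{0-}^{\tau_1-}e^{-\delta t}\,\mathrm{d}D_t+e^{-\delta\tau_1}W_{\bar{J}_{\tau_1}}(X^D_{\tau_1})\,\Big|\,J_0=i,\,X^D_{0-}=x\Big],
\]
with $W_{n+1}$ obtained from $(W_1,\dots,W_n)$ by the post-claim mixing $W_{n+1}(x)=\beta e^{-\beta x}\int_0^xe^{\beta y}\sum_{j=1}^n\pi_jW_j(y)\,\mathrm{d}y$, exactly as in the algorithm. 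Then $V^{(k)}=\Phi V^{(k-1)}$, the solutions of \eqref{eq:V.funEq} are precisely the fixed points of $\Phi$, and $\Phi$ is monotone: $W\le\widetilde W$ componentwise implies $\Phi W\le\Phi\widetilde W$, because for each fixed $D$ the expectation is increasing in the terminal reward.

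For the first part I would first prove $V=\Phi V$ by the dynamic programming principle at the first transition time $\tau_1$. The inequality $V_i\le(\Phi V)_i$ is immediate: for any $D\in\mathscr{U}_{ad}$, conditioning on $\mathscr{F}_{\tau_1}$ and using the strong Markov property writes $V_i^D(x)=\mathbb{E}_{ix}\big[\int_{0-}^{\tau_1-}e^{-\delta t}\mathrm{d}D_t+e^{-\delta\tau_1}V^{D'}_{\bar J_{\tau_1}}(X^D_{\tau_1})\big]$ with $D'$ the shifted continuation strategy, and $V^{D'}\le V$; taking the supremum over $D$ gives the bound. For the reverse inequality I would use a concatenation argument: given $\varepsilon>0$, splice before $\tau_1$ a strategy that is $\varepsilon$-optimal for the single-step problem with, after $\tau_1$, a measurable family of $\varepsilon$-optimal continuations started from the post-transition state $(\bar J_{\tau_1},X^D_{\tau_1})$; the resulting strategy is admissible, its value is at least $(\Phi V)_i(x)$ up to an error vanishing with $\varepsilon$, and the measurable selection is routine since the post-transition surplus is one-dimensional and $V_i$ is finite ($V_i(x)\le x+c/\delta$) and continuous. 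Hence $V$ solves \eqref{eq:V.funEq}. Minimality is then automatic from the monotone iteration: if $W\ge0$ satisfies $W=\Phi W$, then $V^{(0)}\equiv0\le W$, so by monotonicity and induction $V^{(k)}=\Phi V^{(k-1)}\le\Phi W=W$ for all $k$, and letting $k\to\infty$ via Proposition \ref{valuefunctionconvergence} gives $V\le W$.

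For the second part, write $\bar D$ for the global phase-wise barrier strategy with barriers $\{b_i^*\}_{i=1}^n$ and set $g_i(x)=V_i^{\bar D}(x)$, so $0\le g_i\le V_i$. Since the restriction of $\bar D$ to $[0,\tau_1)$ is exactly the phase-wise barrier strategy appearing inside the single-step problem, the strong Markov property at $\tau_1$ gives $g_i(x)=\mathbb{E}_{ix}\big[\int_{0-}^{\tau_1-}e^{-\delta t}\mathrm{d}\bar D_t+e^{-\delta\tau_1}g_{\bar J_{\tau_1}}(X^{\bar D}_{\tau_1})\big]$, while the hypothesis that $\{b_i^*\}$ attains the supremum in the single-step problem with terminal reward $V$ (whose value is $V_i$ by the first part) gives $V_i(x)=\mathbb{E}_{ix}\big[\int_{0-}^{\tau_1-}e^{-\delta t}\mathrm{d}\bar D_t+e^{-\delta\tau_1}V_{\bar J_{\tau_1}}(X^{\bar D}_{\tau_1})\big]$. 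Subtracting, $h_i:=V_i-g_i\ge0$ satisfies $h_i(x)=\mathbb{E}_{ix}\big[e^{-\delta\tau_1}h_{\bar J_{\tau_1}}(X^{\bar D}_{\tau_1})\big]$, and iterating along the successive transition times $\tau_1,\tau_2,\dots$ (a claim counting as a transition) yields $h_i(x)=\mathbb{E}_{ix}\big[e^{-\delta\tau_n}h_{\bar J_{\tau_n}}(X^{\bar D}_{\tau_n})\mathbf{1}_{\{\tau_n<\tau\}}\big]$ for every $n$. Under $\bar D$ the surplus never exceeds $b^*:=\max_ib_i^*$, so $h$ along the path is bounded by $b^*+c/\delta$ (using $0\le V_i(y)\le y+c/\delta$ and $g_i\ge0$), and each holding time between consecutive transitions is exponential with rate at most $\bar\lambda:=\max_i\lambda_i$, whence $\mathbb{E}_{ix}[e^{-\delta\tau_n}]\le\rho^n$ with $\rho=\bar\lambda/(\bar\lambda+\delta)<1$. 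Therefore $0\le h_i(x)\le(b^*+c/\delta)\rho^n\to0$, so $h_i\equiv0$, i.e. $V_i^{\bar D}=V_i$, and $\bar D$ is optimal for \eqref{phasevaluefunction}.

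The main obstacle is the dynamic programming principle $V=\Phi V$ in the first part, in particular the concatenation of $\varepsilon$-optimal strategies and the accompanying measurable-selection point; once that is settled, minimality drops out from monotonicity of $\Phi$ together with Proposition \ref{valuefunctionconvergence}, and the second part is a short uniqueness argument driven by the geometric bound $\mathbb{E}[e^{-\delta\tau_n}]\le\rho^n$.
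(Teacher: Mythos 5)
Your proposal is correct and follows essentially the same route as the paper: the dynamic programming principle gives that $V_i$ solves \eqref{eq:V.funEq}, minimality comes from comparing any nonnegative solution with the iterates $V_i^{(k)}$ and invoking Proposition \ref{valuefunctionconvergence}, and optimality of the barrier strategy comes from iterating the one-step identity along the transition times $\tau_n$ and letting $n\to\infty$. Your additional details --- the concatenation/measurable-selection argument behind the DPP, and the bound $h_i\le (b^*+c/\delta)\big(\bar\lambda/(\bar\lambda+\delta)\big)^n$ justifying the vanishing remainder term that the paper merely asserts --- are sound refinements of the same argument rather than a different approach.
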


\begin{proof}
  The dynamic programming principle implies that the optimal value function $V_i$ is a nonnegative solution of (\ref{eq:V.funEq}).
  On the other hand, let $W_i$ be a nonnegative solution of (\ref{eq:V.funEq}), the $W_i\geq V_i^{(0)}\equiv 0$. Assume that $W_i\geq V_i^{(k-1)}$. We have
  \begin{align*}
    W_i(x)=&\sup_{D\in\mathscr{U}_{ad}}\mathbb{E}_{ix}\Big[\int_{0-}^{\tau_1-} e^{-\delta t}\mathrm{d}D_t +e^{-\delta\tau_1}W_{\bar{J}_{\tau_1}}(X^D_{\tau_1})\Big] \\
    \geq& \sup_{D\in\mathscr{U}_{ad}}\mathbb{E}_{ix}\Big[\int_{0-}^{\tau_1-} e^{-\delta t}\mathrm{d}D_t +e^{-\delta\tau_1}V_{\bar{J}_{\tau_1}}^{(k-1)}(X^D_{\tau_1})\Big]
    = V_i^{(k)}(x).
  \end{align*}
  Hence, $W_i\geq V_i^{(k)}$ for all $k\geq 0$.
  It follows from $V_i^{(k)}\to V_i$ that $W_i\geq V_i$.
  These conclude that $V_i$ is the minimal nonnegative solution of (\ref{eq:V.funEq}).

  Suppose that the phase-wise barrier strategy $D^*$ with $\{b_i^*\}_{i=1}^n$ is the optimal strategy prior to $\tau_1$. Then
  \begin{equation*}
    V_i(x)=\mathbb{E}_{ix}\Big[\int_{0-}^{\tau_1-} e^{-\delta t}\mathrm{d}D^*_t +e^{-\delta\tau_1}V_{\bar{J}_{\tau_1}}(X^{D^*}_{\tau_1})\Big].
  \end{equation*}
  We have by induction that
  \begin{equation*}
    V_i(x)=\mathbb{E}_{ix}\Big[\int_{0-}^{\tau_n-} e^{-\delta t}\mathrm{d}D^*_t +e^{-\delta\tau_n}V_{\bar{J}_{\tau_n}}(X^{D^*}_{\tau_n})\Big]
  \end{equation*}
  for all $n\geq 1$.
  In view of that $\mathbb{E}_{ix}[e^{-\delta\tau_n}V_{\bar{J}_{\tau_n}}]\to 0$ as $n\to\infty$, and letting $n\to\infty$, we have
  \begin{equation*}
    V_i(x)=\mathbb{E}_{ix}\Big[\int_{0-}^{\tau-} e^{-\delta t}\mathrm{d}D^*_t\Big].
  \end{equation*}
\end{proof}

\subsection{Examples}
In what follows, we show some examples.


\begin{example} \label{exam3}
Let the interclaims follow a 3-order phase-type distribution.
The intensity matrix $\mathbf{T}$ is
\begin{equation*}\nonumber
\left(\begin{array}{cccc}
    -\lambda_{1} & \lambda_{12} & \lambda_{13} &\lambda_{14}  \\
    \lambda_{21} & -\lambda_{2} & \lambda_{23} &\lambda_{24} \\
    \lambda_{31} & \lambda_{32} & -\lambda_{3}  &\lambda_{34}\\
\end{array}\right)
=\left(\begin{array}{cccc}
 -10 & 5 & 2 & 3\\ 2 & -12 & 4  & 6\\ 2 & 4 & -8 & 2\\
\end{array}\right),
\end{equation*}
$c=21.4$, $\delta=0.1$, $\beta=1$, $(\pi_1,\pi_2,\pi_3)=(0.2,0.3,0.5)$.
Table \ref{tab:3order1} shows that phase 2 has the highest barrier.
\begin{table}[h]
  \centering
  \begin{tabular}{lccc}
    \hline
    Phase $i$ & 1 & 2 & 3 \\\hline
    The intenisty to the next claim $\lambda_{i4}$& 3&6 &2\\\hline
    Optimal phase-wise barriers $b_i^*$ & 9.61 & 10.26 & 9.27 \\ \hline
  \end{tabular}
  \caption{ $b_i^*$ for a 3-order case with $(\pi_1,\pi_2,\pi_3)=(0.2,0.3,0.5)$.}
  \label{tab:3order1}
\end{table}
The functions $f_i^*(x)$ are shown in Figure \ref{fig:3order}.
\begin{figure}[htb]
\center{\includegraphics[width=0.7\textwidth]{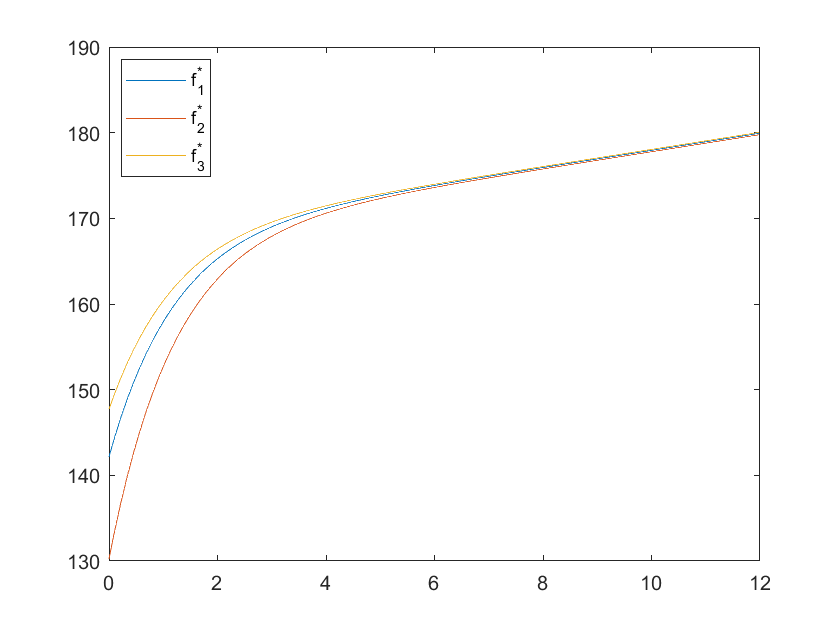}}
  \caption{$f_i^*(x)$ for Example \ref{exam3}.}
  \label{fig:3order}
\end{figure}

If the initial probabilities $(\pi_1,\pi_2,\pi_3)=(0.2,0.3,0.5)$ are replaced by $(0.1,0.1,0.8)$,  the new barriers are shown in Table \ref{tab:3order2}.
As we can see, phase 2 still has the highest barrier.
\begin{table}[h]
  \centering
  \begin{tabular}{lccc}
    \hline
    Phase $i$ & 1 & 2 & 3 \\\hline
    Optimal phase-wise barrier $b_i^*$ & 9.39 & 10.03 & 9.05 \\ \hline
  \end{tabular}
  \caption{$b_i^*$ for a 3-order case with $(\pi_1,\pi_2,\pi_3)=(0.1,0.1,0.8)$.}
  \label{tab:3order2}
\end{table}
\end{example}

\begin{example}\label{exam4}
Let the interclaim times follow a 4-order phase-type distribution, and $c=25$, $\delta=0.1$, $\beta=1$, $(\pi_1,\pi_2,\pi_3,\pi_4)=(0.5,0.2,0.2,0.1)$.
 The intensity matrix $\mathbf{T}$ is
\begin{equation*}
\left(\begin{array}{ccccc}
    -\lambda_1 & \lambda_{12} & \lambda_{13} & \lambda_{14} &\lambda_{15} \\
    \lambda_{21} & -\lambda_2 & \lambda_{23} & \lambda_{24} &\lambda_{25} \\
    \lambda_{31} & \lambda_{32} & -\lambda_{3} & \lambda_{34}&\lambda_{35}\\
    \lambda_{41} & \lambda_{42} & \lambda_{43} & -\lambda_{4} &\lambda_{45} \\
\end{array}\right)=
\left(\begin{array}{ccccc}
    -10 & 5 & 2 & 1 &2 \\
    3 & -14 & 4 & 3 &4 \\
    2 & 2 & -12 & 7 &1 \\
    2 & 3 & 1 & -6 &0 \\
\end{array}\right).
\end{equation*}
The optimal phase-wise barriers are shown in Table \ref{tab:4order}.
In this case, we can see that phase 2 has the highest barrier.
\begin{table}[h]
  \centering
\begin{tabular}{lcccc}
  \hline
  Phase $i$ & 1 & 2 & 3 & 4 \\\hline
  The intensity to the next claim $\lambda_{i5}$&2&4&1&0\\\hline
  Optimal phase-wise barrier $b_i^*$ & 8.907 & 9.554 & 8.274 & 7.106 \\
  \hline
\end{tabular}
  \caption{ $b_i^*$ for a 4-order case}
  \label{tab:4order}
\end{table}
\begin{figure}[htb]
\center{\includegraphics[width=0.7\textwidth]{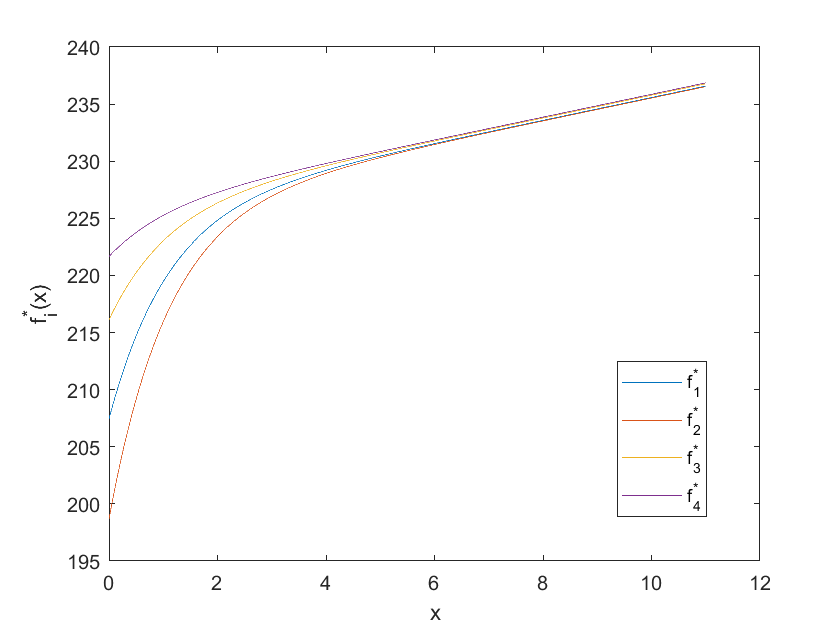}}
  \caption{$f_i^*(x)$ for Example \ref{exam4}.}
  \label{fig:4ordernoequal}
\end{figure}

\end{example}
\begin{example}\label{fourorder2same}
Let us try another example with phase 1 and phase 4 having the same highest intensity. $c=25$, $\delta=0.1$, $\beta=1$, $(\pi_1,\pi_2,\pi_3,\pi_4)=(0.5,0.2,0.2,0.1)$.
The intensity matrix $\mathbf{T}$ is
\begin{equation*}
\left(\begin{array}{ccccc}
    -\lambda_1 & \lambda_{12} & \lambda_{13} & \lambda_{14} &\lambda_{15} \\
    \lambda_{21} & -\lambda_2 & \lambda_{23} & \lambda_{24} &\lambda_{25} \\
    \lambda_{31} & \lambda_{32} & -\lambda_{3} & \lambda_{34}&\lambda_{35}\\
    \lambda_{41} & \lambda_{42} & \lambda_{43} & -\lambda_{4} &\lambda_{45} \\
\end{array}\right)=
\left(\begin{array}{ccccc}
    -16 & 7 & 3 & 1 &5 \\
    4 & -8 & 1 & 2 &1 \\
    0 & 1 & -4 & 1 &2 \\
    0 & 0 & 0 & -5 &5 \\
\end{array}\right).
\end{equation*}
The optimal phase-wise barriers are shown in Table \ref{tab:4order2}. In this example, the phase 1 and phase 4 have the same highest intensity to the next claim and phase 1, 4 have the same highest barrier.
\begin{table}[h]
  \centering
\begin{tabular}{lcccc}
  \hline
  State  $i$ & 1 & 2 & 3 & 4 \\\hline
  The intensity to the next claim $\lambda_{i5}$ &5&1&2&5\\\hline
  Optimal phase-wise barrier $b_i^*$ & 10.109 & 8.805 & 9.205 & 10.109 \\
  \hline
\end{tabular}
  \caption{ $b_i^*$ with two same highest intensity  }
  \label{tab:4order2}
\end{table}

\begin{figure}[htb]
\center{\includegraphics[width=0.7\textwidth]{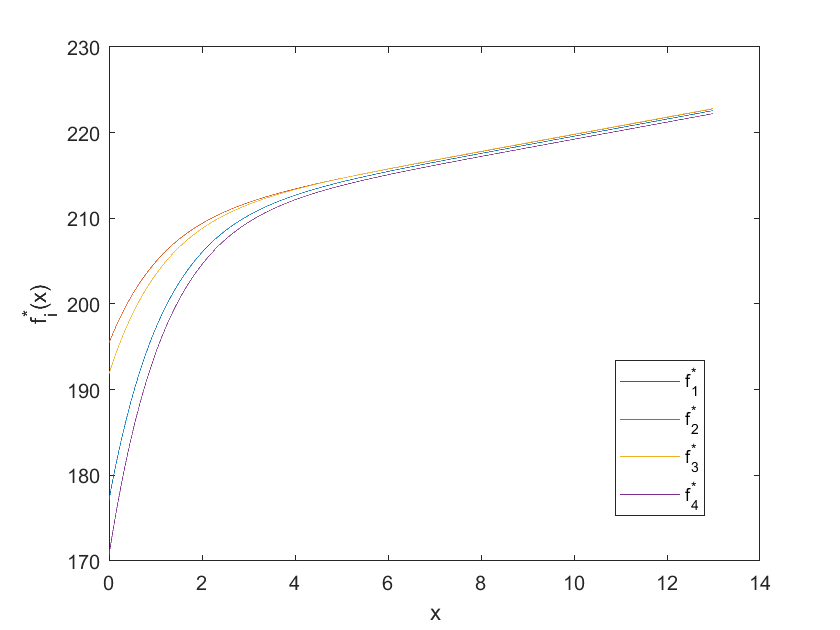}}
  \caption{$f_i^*(x)$ for Example \ref{fourorder2same}.}
  \label{fig:4order2same}
\end{figure}

\end{example}

\begin{example}\label{exam4allsame}
Let use try another example with the same intensity to the next claim.
$c=21$, $\delta=0.1$, $\beta=1$, $(\pi_1,\pi_2,\pi_3,\pi_4)=(0.5,0.2,0.2,0.1)$.
The intensity matrix $\mathbf{T}$ is
\begin{equation*}
\left(\begin{array}{ccccc}
    -\lambda_1 & \lambda_{12} & \lambda_{13} & \lambda_{14} &\lambda_{15} \\
    \lambda_{21} & -\lambda_2 & \lambda_{23} & \lambda_{24} &\lambda_{25} \\
    \lambda_{31} & \lambda_{32} & -\lambda_{3} & \lambda_{34}&\lambda_{35}\\
    \lambda_{41} & \lambda_{42} & \lambda_{43} & -\lambda_{4} &\lambda_{45} \\
\end{array}\right)=
\left(\begin{array}{ccccc}
    -16 & 7 & 3 & 1 &5 \\
    4 & -12 & 1 & 2 &5 \\
    0 & 1 & -7 & 1 &5 \\
    1 & 1 & 3 & -10 &5 \\
\end{array}\right).
\end{equation*}
The optimal phase-wise barriers are shown in Table \ref{tab:4order3}. In this example,  all phases have the same highest intensity to the next claim and  all phases have the same highest barrier. The figure in \ref{fig:4orderallequal} shows that the value of $f_1^*(x),f_2^*(x),f_3^*(x),f_4^*(x)$ equals to each other.
\begin{table}[h]
  \centering
\begin{tabular}{lcccc}
  \hline
  Phase $i$ & 1 & 2 & 3 & 4 \\\hline
  The intensity to the next claim & 5&5&5&5\\\hline
  Optimal phase-wise barrier $b_i^*$ & 10.611 & 10.611 & 10.611 & 10.611 \\
  \hline
\end{tabular}
  \caption{ $b_i^*$ with the same highest intensity}
  \label{tab:4order3}
\end{table}

\begin{figure}[htb]
\center{\includegraphics[width=0.7\textwidth]{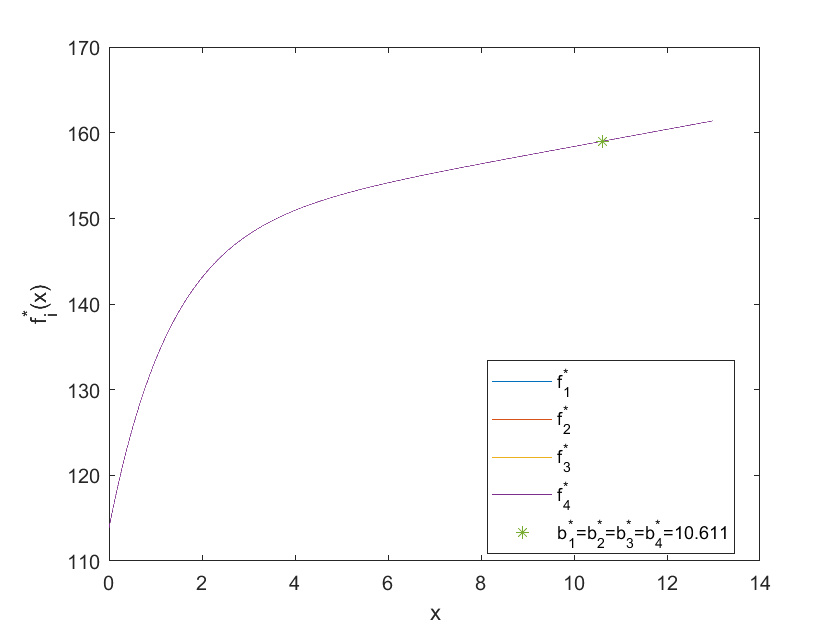}}
  \caption{$f_i^*(x)$ for Example \ref{exam4allsame}.}
  \label{fig:4orderallequal}
\end{figure}
\end{example}

\subsection{The phase with the highest barrier has the highest intensity to the claim}
\label{sec:sectionjust}

From the above numerical experiments, an interesting phenomenon can be brought out:
\begin{proposition}
The phase with the the highest barrier must be the phase with highest intensity to the next  claim.
\end{proposition}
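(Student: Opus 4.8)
The plan is to push the argument of Theorem~\ref{theoremlambda} through in the general $n$-phase model. Let $\lambda_{i,n+1}$ denote the intensity of phase $i$ into the absorbing (claim) state, let $k$ be a phase realising the largest barrier, $b_k^*=\max_{1\le i\le n}b_i^*$, and put $g(x):=\sum_{i=1}^n\pi_i f_i^*(x)$, so that $f_{n+1}^*(x)=\beta e^{-\beta x}\int_0^x e^{\beta y}g(y)\,\mathrm{d}y$. I want to show $\lambda_{k,n+1}\ge\lambda_{i,n+1}$ for all $i$. The structural fact I would exploit — exactly as in the $2$-order proof — is that \emph{at the largest barrier every other value function is already linear}: since $b_j^*\le b_k^*$ for all $j$, we have ${f_j^*}'(b_k^*)=1$ for every $j\le n$, while necessary condition~(2) gives ${f_k^*}''(b_k^*)=0$.

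First I would differentiate the optimality equation $c{f_i^*}'(x)+\sum_{j\neq i}\lambda_{ij}f_j^*(x)-(\lambda_i+\delta)f_i^*(x)=0$ (valid on $[0,b_i^*]$, the $j$-sum running up to $n+1$), to get $c{f_i^*}''(x)+\sum_{j\neq i}\lambda_{ij}{f_j^*}'(x)-(\lambda_i+\delta){f_i^*}'(x)=0$. Evaluating this at $x=b_k^*$ with $i=k$, using ${f_k^*}''(b_k^*)=0$, ${f_j^*}'(b_k^*)=1$ $(j\le n)$ and $\lambda_k=\sum_{j\neq k}\lambda_{kj}$, collapses it to the exact identity $\lambda_{k,n+1}\big({f_{n+1}^*}'(b_k^*)-1\big)=\delta$; since $\delta>0$ this already forces $\lambda_{k,n+1}>0$ and ${f_{n+1}^*}'(b_k^*)>1$. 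Evaluating the same differentiated equation at $x=b_i^*$ for a general $i$, and now using only ${f_j^*}'(b_i^*)\ge 1$ (necessary condition~(1), continuous up to $b_j^*$) together with $\lambda_{ij}\ge 0$, yields the inequality $\lambda_{i,n+1}\big({f_{n+1}^*}'(b_i^*)-1\big)\le\delta$. Putting these together reduces the proposition to the single claim ${f_{n+1}^*}'(b_i^*)\ge{f_{n+1}^*}'(b_k^*)$ for each $i$: if $\lambda_{i,n+1}>0$ then $\lambda_{i,n+1}\le\delta/({f_{n+1}^*}'(b_i^*)-1)\le\delta/({f_{n+1}^*}'(b_k^*)-1)=\lambda_{k,n+1}$, and if $\lambda_{i,n+1}=0$ there is nothing to prove. (Applying the first computation to a phase with $b_i^*=b_k^*$ gives equality, which matches the tied-barrier examples such as Examples~\ref{fourorder2same}--\ref{exam4allsame}.)

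Since $b_i^*\le b_k^*$, the claim ${f_{n+1}^*}'(b_i^*)\ge{f_{n+1}^*}'(b_k^*)$ follows if $f_{n+1}^*$ is concave on $[0,b_k^*]$. Here I would use ${f_{n+1}^*}''(x)=\beta\big(g'(x)-{f_{n+1}^*}'(x)\big)$, so that $f_{n+1}^*$ is concave at $x$ iff ${f_{n+1}^*}'(x)\ge g'(x)$, where $g'(x)=\sum_i\pi_i{f_i^*}'(x)\ge 1$ with equality only for $x\ge b_k^*$. On $[b_k^*,\infty)$ this is immediate: $g'\equiv 1<{f_{n+1}^*}'(b_k^*)$, so ${f_{n+1}^*}'$ solves ${f_{n+1}^*}''=\beta(1-{f_{n+1}^*}')$ and decreases monotonically to $1$. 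On $[0,b_k^*)$ I would attempt a maximum-principle argument on the linear ODE ${f_{n+1}^*}''+\beta{f_{n+1}^*}'=\beta g'$ with terminal datum ${f_{n+1}^*}'(b_k^*)=1+\delta/\lambda_{k,n+1}$: at a first point $\xi<b_k^*$ where ${f_{n+1}^*}'$ touches the level ${f_{n+1}^*}'(b_k^*)$ from below one has ${f_{n+1}^*}''(\xi)\ge 0$, hence $g'(\xi)\ge{f_{n+1}^*}'(b_k^*)>1$, and I would seek a contradiction from the behaviour of $g'$ near $b_k^*$ (note ${f_k^*}'(\xi)>1$ since $\xi<b_k^*$).

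I expect the concavity of $f_{n+1}^*$ on $[0,b_k^*]$ to be the hard part. In the $2$-order case it came from Theorem~\ref{theoremconcave1}, the concavity of the largest-barrier value function $f_2^*$ on $[b_1^*,\infty)$; for $n\ge 3$ the analogous property would have to hold for the mixture $g=\sum_i\pi_i f_i^*$, and because the individual ${f_i^*}'$ need not be monotone when $n\ge 3$ it is not automatic (the explicit calculation that would replace Theorem~\ref{theoremconcave1} is essentially intractable, as noted at the start of Section~\ref{sec:nordercase}). A fully rigorous proof therefore probably needs one of: an ODE argument showing each $f_i^*$ is concave beyond its own barrier; a complete maximum-principle argument as sketched above; or — consistently with the proposition being drawn from the numerical experiments — taking the concavity of $f_{n+1}^*$, which is visible in every example, as an explicit hypothesis.
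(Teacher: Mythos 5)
Your identity at the maximal barrier, $\lambda_{k,n+1}\bigl({f_{n+1}^*}'(b_k^*)-1\bigr)=\delta$, is exactly the paper's \eqref{justifiability4}, and the inequality $\lambda_{i,n+1}\bigl({f_{n+1}^*}'(b_i^*)-1\bigr)\le\delta$ obtained at each phase's \emph{own} barrier is also sound. But the bridge between the two points, ${f_{n+1}^*}'(b_i^*)\ge{f_{n+1}^*}'(b_k^*)$, i.e.\ concavity of $f_{n+1}^*$ on $[0,b_k^*]$, is never proved, and you say so yourself; the maximum-principle sketch does not close it (at the touching point $\xi$ you only learn $g'(\xi)\ge{f_{n+1}^*}'(b_k^*)$, and for $n\ge3$ nothing rules out some ${f_i^*}'$ being large at $\xi$, so no contradiction follows without essentially the concavity you are trying to establish), and adding it as a hypothesis would weaken the proposition. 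So as written the argument has a genuine gap, and it is a gap created by where you chose to evaluate the relations, not by the proposition itself.

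The paper's proof never leaves the single point $b_n=\max_i b_i^*$: for every phase $i$ the differentiated balance relation is written at $x=b_n$ — inequality \eqref{justifiability1} for $i<n$ (the HJB first component is $\le 0$ on $[b_i^*,\infty)$, and beyond $b_n$ every $f_j^*$ with $j\le n$ is affine, so only ${f_{n+1}^*}'$ survives), and equality \eqref{justifiability2} for the maximal phase. Since ${f_j^*}'(b_n)=1$ and ${f_j^*}''(b_n)=0$ for all $j\le n$, the unknown factor ${f_{n+1}^*}'(b_n)-1$ is the \emph{same} in every relation, and \eqref{justifiability3}–\eqref{justifiability4} give $\lambda_{i,n+1}\bigl({f_{n+1}^*}'(b_n)-1\bigr)\le\delta=\lambda_{n,n+1}\bigl({f_{n+1}^*}'(b_n)-1\bigr)$, hence $\lambda_{i,n+1}\le\lambda_{n,n+1}$ immediately — no concavity or monotonicity of $f_{n+1}^*$ is needed, which is precisely why the $2$-order concavity theorem has no $n$-order analogue in this argument. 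The repair of your proof is therefore simple: carry out your ``general $i$'' computation at $x=b_k^*$ rather than at $x=b_i^*$, using the one-sided HJB relation there, and your first paragraph already contains everything else.
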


Until now, we know that the optimal phase-wise barrier strategy is optimal among all admissible policies, not just the case of 2--order phase-type distribution. Thus,
the phase-wise optimal barrier strategy's corresponding value function solves the following HJB equation
\begin{equation*}
\max\left\{cf_i^*(x)+\sum_{j=1,j\neq i}^{n+1}\lambda_{ij}f_j^*(x)-(\lambda_i+\delta)f_i^*(x),1-{f_i^*}'(x)\right\}=0, i=1,\ldots, n,
\end{equation*}
where $f_{n+1}^*(x)=\sum_{i=1}^n\pi_i\mathbb{E}[f_i(x-Y)]$.
 Without loss of generality, we assume that $0<b_1<b_2<\ldots<b_n$.
At $x=b_n$,
\begin{equation}\label{justifiability1}
c{f_i^*}''(b_n)+\sum_{j=1, j\neq i}^{n+1}\lambda_{ij}{f_j^*}'(b_n)-(\lambda_i+\delta){f_i^*}'(b_n)\le 0, \quad i<n.
\end{equation}
\begin{equation}\label{justifiability2}
c{f_n^*}''(b_n)+\sum_{j=1,j\neq n}^{n+1}\lambda_{nj}{f_j^*}'(b_n)-(\lambda_n+\delta){f_n^*}'(b_n)=0.
\end{equation}
Notice that for all $i\le n,$  ${f_i^*}'(b_n)=1$ and ${f_i^*}''(b_n)=0$. Combining this with \eqref{justifiability1}, we get that
\begin{equation}\label{justifiability3}
{f_{n+1}^*}'(b_n)\le 1+\frac{\delta}{\lambda_{i,n+1}}, \quad i<n.
\end{equation}
On the other hand, by \eqref{justifiability2}, we get that
\begin{equation}\label{justifiability4}
{f_{n+1}^*}'(b_n)=1+\frac{\delta}{\lambda_{n,n+1}}.
\end{equation}
Combining \eqref{justifiability3} and \eqref{justifiability4}, we get that $\lambda_{i,n+1}\le \lambda_{n,n+1}$.
Eventually, we get that the $n^{\mbox{th}}$ phase has the highest intensity to the next claim, which verifies that the phase with the highest barrier has the highest intensity to the next claim.

\section{Conclusion}
In this paper, we study the optimal dividend problem when the interclaim times follow the $n$--order $(n\ge 2$) phase type distribution. By direct calculation, we theoretically prove that the optimal strategy is a  phase-wise barrier strategy in the case of 2--order distributed interclaim times. On the other hand, for the case of $n$--order distributed interclaim times,  since there are two many parameters in the matrix, the direct calculation is impossible. We bring up a numerical algorithm to show the optimality of the phase-wise barrier strategy.

About the optimal barrier's size comparison of different phases,   we also find that in the case of $n(n\ge 3)$--order, the phase with the highest is the phase with the highest intensity to the next claim, which coincides with the same conclusion in the case of $2$--order.

\section*{Acknowledgements}

Part of this work was done when the first author were visiting Professor Hansj\"org Albrecher. Thanks for his inspiration and guidance.








\end{document}